\newtheorem{thm}{Theorem}[section]
\newtheorem{prop}[thm]{Proposition}
\newtheorem{cor}[thm]{Corollary}
\newtheorem{rem}[thm]{Remark}
\newtheorem{ass}[thm]{Assumption}
\newcommand{\norm}[1]{\left \|  #1 \right \|}
\newcommand{\abs}[1]{\left|  #1 \right|}
\newcommand{\argmin}{\mathop{\textnormal{argmin}}}
\newcommand{\real}[1]{\textnormal{Re} \left( #1 \right) }
\newcommand{\HS}{\mathop{\mathrm{HS}}}
\newcommand{\Cov}{\mathop{\mathrm{Cov}}}
\begin{document}

\title{Uniqueness of an inverse source problem in experimental aeroacoustics}

\author{Thorsten Hohage$^{1,2}$, Hans-Georg Raumer$^3$ and Carsten Spehr$^3$}
\address{$^1$Institute for Numerical and Applied Mathematics, University of G{\"o}ttingen, Germany}
\address{$^2$Max Planck Institute for Solar System Research, G{\"o}ttingen, Germany}
\address{$^3$Institute of Aerodynamics and Flow Technology, German Aerospace Center (DLR) G{\"o}ttingen, Germany}

\vspace{10pt}
\begin{indented}
\item[]\today
\end{indented}

\begin{abstract}
This paper is concerned with the mathematical analysis of experimental methods for the estimation of the power of an uncorrelated, extended aeroacoustic source from measurements of correlations of pressure fluctuations. We formulate a continuous, infinite dimensional model describing these experimental techniques based on the convected Helmholtz equation in $\mathbb{R}^3$ or $\mathbb{R}^2$.  
As a main result we prove that an unknown, compactly supported source power function 
is uniquely determined by idealized, noise-free correlation measurements. 
Our framework further allows for a precise characterization of state-of-the-art source reconstruction methods and their interrelations.
\end{abstract}
\vspace{2pc}
\noindent{\it Keywords}: aeroacoustics, correlation data, inverse source problem, uniqueness
\section{Introduction} 
In experimental aeroacoustics one measures acoustic, randomly generated signals and aims at reconstructing the power of sources. In this paper we consider time-harmonic sound propagation in homogeneous flow fields which may be considered as simplified models of wind tunnel experiments. 
In that case acoustic pressure fluctuations may be caused by fluid-structure interactions or local turbulent structures inside the flow field which are then propagated towards a measurement array in the homogeneous main flow.  

The experimental investigation of aeroacoustic sound sources began in the 1970-s. Back then, the standard measurement device was an elliptic mirror \cite{Grosche1975}. However, the application of microphone arrays soon found its way into the field. Active and passive microphone array methods are widely applied for the localization of sources of wave fields or the  imaging of the propagation medium \cite{Moscoso2018, Garnier2016}. The field of applications covers many branches of physics and engineering for example radar (see, e.g., \cite{Haykin1993}) or geophysics (see, e.g., \cite{Bleistein2001}).  The first fundamental work on microphone array imaging methods in aeroacoustics, published in 1976 by Billingsley \& Kinns \cite{Billingsley1976}, deals with aeroacoustic sound sources of a turbulent jet. Since then, the data processing and evaluation techniques for microphone array data were constantly developed further and microphone arrays are nowadays the standard experimental measurement devices for aeroacoustic experiments. Some of the most common applications of microphone arrays for (aero-)acoustic purposes are aircraft measurements (fly-over \cite{Howell1986} or inside a wind tunnel \cite{Underbrink2002}), jet noise \cite{Suzuki2010}, wheel/rail noise of trains \cite{Barsikow1987} and wind turbines \cite{Oerlemans2007}. 
There exists also a close connection to helioseismic holography, 
which back-propagates correlations of acoustic waves observed on the Sun's near-side to its interior or far-side to study the structure and dynamics therein
(see Lindsey \& Brown \cite{LB:00,LB:00a}, Gizon et al.\  \cite{GFYBB:18}, and Section \ref{sec:beamforming}). 
This article considers only free field sound propagation since many microphone array methods rely on this assumption. Sound propagation models based on numerical simulation allow more complex geometrical setups. A source localization method using Finite Element simulation on two- and three-dimensional geometries is presented by Kaltenbacher, Kaltenbacher \& Gombots in \cite{Kaltenbacher2018, Gombots2018}.

In wind tunnel experiments and many other applications discussed above it is common practice to compute the correlation matrix of the measurement array in a preprocessing step and reconstruct source powers from these correlation data. The main aim of this work is to establish a uniqueness result for the inverse problem of reconstructing bounded and compactly supported source power functions from correlation data in a continuous setting. The analysis is carried out for arbitrary constant, subsonic convection and therefore, as a special case, also covers linear acoustics in a medium at rest. 

Let us distinguish our problem from two related inverse source problems with rather different properties. We are concerned with spatially extended sources, in contrast to the acoustic localization of a small number of point sources. Such \emph{source localization problems} occur for example in speaker localization or speech enhancement for hearing aid devices (see, e.g., \cite{Alexandridis2015,Farmani2017, Ganguly2017, Long2019, Tachioka2014,WTdB:09}). Methods for this scenario seek to localize a small number of sources inside a reverberant room. Often, the direction of arrival (DOA) of the acoustic sources is of special interest, since it allows to suppress signals from other directions (speech enhancement). One of the main difficulties for this class of problems is that the source signal is superposed with disturbance signals due to wall reflections. 

Moreover, whereas we consider the  identification of fully uncorrelated sources, for the \emph{identification of deterministic  (i.e.\  completely correlated) sources} it is well known and easy to see that such sources are not uniquely determined from distant measurements of acoustic waves since there exist so-called non-radiating sources. Nevertheless, significant  progress has been achieved recently by Griesmaier \& Sylvester in retrieving partial information from such sources, e.g.\ stably splitting well-separated sources, see \cite{GS:16,GS:17,GS:17b}. Note that for deterministic sources, data consist of deterministic wave fields, whereas for random sources one may consider correlation data. The fact that correlation functions depend on more independent variables than the corresponding wave fields is a first formal indication that uniqueness results for uncorrelated sources may be possible.  

Since we consider spatially extended sources, a continuous source representation is natural for our scenario. Furthermore, in wind tunnel experiments sources may be  considered as non-deterministic  since the sound field is generated and measured inside a flow field with turbulent structures. As already mentioned above, one often uses the correlations between the microphone signals as input for the reconstruction process. One of the basic reconstruction techniques, based on microphone correlation measurements are Beamforming methods \cite{Allen2002}. To improve the spatial resolution of Beamforming outputs, post processing methods like DAMAS \cite{Brooks2006} and Clean-SC \cite{Sijtsma2007} have been proposed. Covariance Matrix Fitting \cite{Blacodon2004, Yardibi2008} is an inverse method that reconstructs source powers directly from the measured correlation matrix. We will review these methods from a continuous perspective given by the setting of our uniqueness result. 

The plan of the remainder of this paper is as follows: 
The forward problem  for time harmonic sound propagation of uncorrelated sources is introduced in Section \ref{sec:fwd_prob} before presenting  our main uniqueness result in Section \ref{sec:theory}. In Section \ref{sec:incorporation} we generalize commonly used discrete reconstruction methods to our continuous framework, study their interrelations and compare their performance for an experimental data example. 
Finally, we end this paper with some conclusions.  


\section{The forward problem} \label{sec:fwd_prob}
We consider as geometrical setup (see also Figure \ref{fig:exp_sketch})
\begin{itemize}
\item $d \in \lbrace 2,3 \rbrace$
\item a bounded, open domain $\Omega \subset \mathbb{R}^d$ with $\mathbf{0} \in \Omega$ (source region) \\such that $\mathbb{R}^d\backslash  \overline{\Omega}$ is connected and
\item a bounded, open domain $\mathbb{M} \subset \mathbb{R}^d \backslash \overline{\Omega}$  (measurement region).
\end{itemize}
\begin{figure}[ht!]
\centering
\includegraphics[width = .5\textwidth]{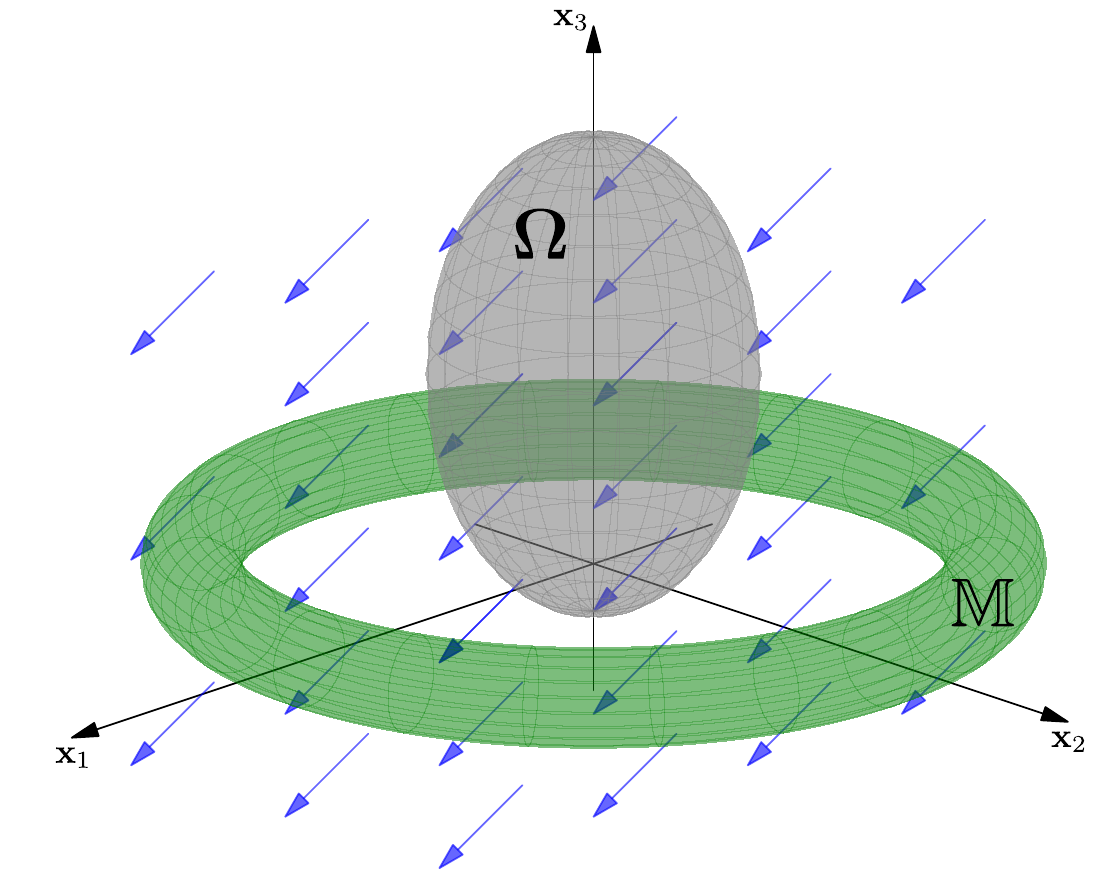}
\caption{Sketch of the geometrical setup. The flow field is indicated in blue.} \label{fig:exp_sketch}
\end{figure}
The sound propagation model inside a homogeneous flow is given by the convected Helmholtz equation for a subsonic, constant flow field $\mathbf{u} \in \mathbb{R}^d$. Let $c$ denote the speed of sound and $\mathbf{m} = \frac{1}{c} \mathbf{u}$ the Mach vector. We consider a subsonic regime, i.e.\ we assume that $|\mathbf{m}| < 1$. For the  sign convection $e^{-i \omega t}$ for the time factor, the convected Helmholtz equation for a function $p$ and a source term $Q$ reads as 
\begin{equation} \label{eq:conv_helmeq}
(k + i \mathbf{m} \cdot \nabla)^2p + \Delta p =  -Q \ .
\end{equation}
The free field Green's function for equation \eref{eq:conv_helmeq} in three dimensions is (cf. \cite[Appendix A]{Mosher1984})
\begin{equation} 
g(\mathbf{x},\mathbf{y}) =  \frac{\textnormal{exp} \left(\frac{ik}{\beta^2} \left( -(\mathbf{x}-\mathbf{y})\cdot \mathbf{m} + |\mathbf{x}-\mathbf{y}|_{\mathbf{m}} \right) \right) }{4 \pi |\mathbf{x}-\mathbf{y}|_{\mathbf{m}} } \ ,\label{eq:gf3d_convhelmeq}
\end{equation}
with the Mach scaled distance 
\begin{equation}
|\mathbf{x}-\mathbf{y}|_{\mathbf{m}} = \sqrt{\left((\mathbf{x}-\mathbf{y}) \cdot \mathbf{m} \right)^2 + \beta^2 |\mathbf{x}-\mathbf{y}|^2} \label{eq:machdist}
\end{equation}
and $\beta^2 = 1- |\mathbf{m}|^2$. Note that  $|\cdot|_{\mathbf{m}}$ \eref{eq:machdist} is a norm on $\mathbb{R}^d$ which is induced by a scalar product. Recall that the free Green's function for $\mathbf{m}=0$ in two dimensions is
\begin{equation*}
    g(\mathbf{x}, \mathbf{y}) = \frac{i}{4} H^{(1)}_0(k \abs{\mathbf{x}-\mathbf{y}} ) 
\end{equation*}
where $H^{(1)}_0$ denotes the Hankel function of the first kind of order $0$ (see \cite[p.74]{Colton2013}). Straightforward computations show that the free Green's function for general $\mathbf{m}$ with $\abs{\mathbf{m}}<1$ is given by
\begin{equation}
g(\mathbf{x}, \mathbf{y}) = \frac{i}{4 \beta} \textnormal{exp} \left( -\frac{ik}{\beta^2} (\mathbf{x} - \mathbf{y}) \cdot \mathbf{m} \right) H^{(1)}_0 \left( \frac{k}{\beta^2} \abs{\mathbf{x}-\mathbf{y}}_{\mathbf{m}}  \right) \ . \label{eq:gf2d_convhelmeq}
\end{equation} 
In a real experimental setup and evaluation process, the data and the reconstructed quantities are finite. We consider $M$ microphones in the measurement region $\mathbb{M}$ at positions $\left \lbrace \mathbf{x}_m \right \rbrace_{m=1}^M$. We also discretize the source region $\Omega$ by $N$ focus points $\left \lbrace \mathbf{y}_n \right \rbrace_{n=1}$ and corresponding disjoint sets 
$\Omega_n$ such that $\mathbf{y}_n\in\Omega_n$ and $\bigcup_{n=1}\Omega_n = \Omega$. E.g., the sets $\Omega_n$ may be chosen as Voronoi cells. The random source term $Q$ is now approximated by a sum of scaled approximate delta distributions with random complex amplitudes $\Pi_n$, e.g.\ a piecewise constant function 
\begin{equation}\label{eq:sourcerep}
\underline{Q}(\mathbf{y}) = \sum_{n=1}^N\Pi_n \phi_{n}(\mathbf{y}) \ ,
\end{equation}
where $\phi_n =1_{\Omega_n}|\Omega_n|^{-1/2}$ is an $L^2$-normalized indicator function of $\Omega_n$. 
A standard assumption is that the complex amplitudes $\Pi_n$ have zero mean and are mutually uncorrelated:
\begin{ass}[Uncorrelated sources]\label{as:uncor} \ \\
The random source amplitude vector $\mathbf{\underline{\Pi}}= \left(\Pi_1, \dots, \Pi_N  \right)^{\top}$ satisfies 
\[
\mathbb{E}\left( \mathbf{\underline{\Pi}} \right) = \mathbf{0} \qquad \mbox{and}\qquad 
\textnormal{Cov}\left( \mathbf{\underline{\Pi}} \right) = \underline{M_q} 
\]
with the source powers $\mathbb{E} \left[  \abs{\Pi_n}^2 \right]  
= q(\mathbf{y}_n)$ for some continuous source power function $q:\overline{\Omega}\to [0,\infty)$ and  $\underline{M_q} = \textnormal{diag} 
\left(q(\mathbf{y}_1), \dots ,  q(\mathbf{y}_N)  \right)$.
\end{ass}
\paragraph{Notation:}Mathematical objects that posses a discrete and an infinite dimensional version will be indicated by the same symbol for the entire article. For distinction the discrete versions will be marked by an underscore.

At any point $\mathbf{x}\in\mathbb{R}^d$, the source term $\underline{Q}$ generates the pressure signal 
\begin{equation}
    \underline{p}(\mathbf{x}) := \int_{\Omega} g(\mathbf{x}, \mathbf{y}) \underline{Q}(\mathbf{y}) d \mathbf{y} \ . \label{eq:signalprop}
\end{equation}
Note that $\mathbb{E}[ \underline{p}(\mathbf{x})]=0$ under 
Assumption \ref{as:uncor}. 
For the microphone signal vector $\underline{\mathbf{p}}=\left( \underline{p}(\mathbf{x}_1), \dots , \underline{p}(\mathbf{x}_M)  \right)^{\top}$ equation \eref{eq:signalprop} yields
\begin{equation*}
    \underline{\mathbf{p}} = \underline{\mathcal{G}} \mathbf{\underline{\Pi}} \  ,
\end{equation*}
where the propagation matrix $ \underline{\mathcal{G}} \in \mathbb{C}^{M \times N} \ $ is defined by
\begin{equation}
\underline{\mathcal{G}}_{mn} =  
\int_{\Omega_n^N}g(\mathbf{x}_m, \mathbf{y})\,d\mathbf{y} |\Omega_n^{N}|^{-1/2} \approx
g(\mathbf{x}_m, \mathbf{y}_n)|\Omega_n|^{1/2} 
\ . \label{eq:prop_mat}
\end{equation}
In aeroacoustic array measurements, one estimates the covariance matrix (also \textit{cross correlation matrix} or \textit{cross-spectral matrix}) of the microphone signal vector $\underline{\mathbf{p}}$, where the estimation process is usually carried out by Welch's method \cite{Welch1967}. Assumption \ref{as:uncor} yields the following representation for the estimated cross correlation matrix $\underline{C}^{\mathrm{obs}}$
\begin{equation} 
\fl
\underline{C}^{\mathrm{obs}} 
\approx \Cov \left(\underline{\mathbf{p}} \right) = 
\Cov \left( \underline{\mathcal{G}} \mathbf{\underline{\Pi}}  \right) = \underline{\mathcal{G}} \underline{M_q}  \underline{\mathcal{G}}^{*}  =: \underline{\mathcal{C}} (\underline{q}) \ . \label{eq:fwd_discrete}
\end{equation}
If we let $N$ tend to infinity such that the fill distance in $\Omega$ tends to $0$, then under mild assumptions on the choice of $\Omega_n$ (e.g.\ for Voronoi cells) and the distribution of $\mathbf{\underline{\Pi}}$ (e.g.\ for Gaussian distributions) the 
random functions $\underline{Q}$ tend to a Gaussian random process $Q$ on $\Omega$ with covariance operator 
\begin{equation*}
(M_q v)(\mathbf{y}) := q(\mathbf{y})v(\mathbf{y}) 
\end{equation*}
in the sense that $\int_{\Omega}\psi \underline{Q}\varphi\,d\mathbf{y}$ 
converges in probability to $\langle \psi, Q\varphi \rangle$ 
for any $\psi,\varphi\in L^2(\Omega)$ and $\mathbb{E}[\langle \psi, Q\varphi \rangle] = 
\langle \psi, M_q\varphi \rangle$. 
(For $q\equiv 1$, the process $Q$ is Gaussian white noise.) Moreover, the limiting 
pressure signal $p(\mathbf{x}):=\langle Q, g(\mathbf{x},\cdot)\rangle = (\mathcal{G}Q)(\mathbf{x})$ 
can be written in terms of the 
volume potential operator $\mathcal{G}: L^2(\Omega) \rightarrow L^2(\mathbb{M})$
\begin{equation} \label{eq:volpot_op}
(\mathcal{G}v)(\mathbf{x}) := \int_{\Omega} g(\mathbf{x},\mathbf{y}) v(\mathbf{y}) d\mathbf{y}. 
\end{equation}
Note that $\mathbb{E}[p(\mathbf{x})]=0$ and that 
$p$ has the covariance function  
\begin{equation*}
c_q(\mathbf{x}_1, \mathbf{x}_2) := \Cov(p(\mathbf{x}_1),p(\mathbf{x}_2))\
= \int_{\Omega} g(\mathbf{x}_1, \mathbf{y}) q(\mathbf{y}) \overline{ g(\mathbf{x}_2, \mathbf{y})} d\mathbf{y} \ ,
\end{equation*}
which under the assumptions above is the limit of 
$\Cov(\underline{p}(\mathbf{x}_1),\underline{p}(\mathbf{x}_2))$ as $N\to\infty$. This implies that the integral operator $\mathcal{C}(q):L^2(\mathbb{M})\to L^2(\mathbb{M})$, 
\begin{equation*}
\left( \mathcal{C}(q) \varphi \right)(\mathbf{x}_1) := \int_{\mathbb{M}} c_q(\mathbf{x}_1, \mathbf{x}_2) \varphi(\mathbf{x}_2) d\mathbf{x}_2 \ 
\end{equation*}
(the covariance operator of the acoustic pressure signal $p$) can be written as 
\begin{equation}\label{eq:fwd_infinite}
\mathcal{C}(q) = 
\mathcal{G} M_q \mathcal{G}^* \ .
\end{equation}

We will consider the forward operator $\mathcal{C}$ as an operator-valued linear mapping from $L^2\left(  \Omega \right)$ to the space $\HS\left( L^2(\mathbb{M} )\right)$ of Hilbert-Schmidt operators on $L^2(\mathbb{M})$
\begin{equation*}
\mathcal{C}: \ L^2(\Omega) \rightarrow \HS\left( L^2(\mathbb{M}) \right), \ \ \ \ q \mapsto \mathcal{C}(q) \ .
\end{equation*}
Recall that a compact operator 
$K:X\to X$ in a Hilbert space $X$ is called a Hilbert-Schmidt operator if the eigenvalues of $K^*K$ are summable. Equipped with the inner product 
$\langle K_1,K_2\rangle_{\HS}:=\mathrm{tr}\,(K_2^*K_1)$ the space $\HS(X)$ is a Hilbert space \cite[Theorem VI.22 (c)]{Reed1972}. As for integral operators, the Hilbert-Schmidt norm is given by the $L^2$-norm of the kernel function (see \cite[Theorem VI.23]{Reed1972}), we have  
\begin{equation}\label{eq:HSnormCq}
\|\mathcal{C}(q)\|_{\HS} = \|c_q\|_{L^2(\mathbb{M}\times \mathbb{M})}.
\end{equation}
As $c_q$ is the true input data of the inverse problem, this shows that $\HS(L^2(\mathbb{M}))$ is a natural choice for the image space. 

\begin{prop}[Mapping property of the forward operator] \label{prop:HS_cov}\ \\ For any $q \in L^2(\Omega)$ the operator $\mathcal{C}(q)$ belongs to $\HS\left( L^2(\mathbb{M}) \right)$. 
\end{prop}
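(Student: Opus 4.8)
The plan is to reduce everything, via the kernel identity \eref{eq:HSnormCq}, to the square-integrability of the covariance kernel $c_q$ over $\mathbb{M}\times\mathbb{M}$. Since $\norm{\mathcal{C}(q)}_{\HS}=\norm{c_q}_{L^2(\mathbb{M}\times\mathbb{M})}$, it suffices to bound the latter by a constant multiple of $\norm{q}_{L^2(\Omega)}$. First I would estimate the kernel pointwise: bounding the integrand of $c_q$ by $\abs{g(\mathbf{x}_1,\mathbf{y})}\abs{q(\mathbf{y})}^{1/2}\cdot\abs{q(\mathbf{y})}^{1/2}\abs{g(\mathbf{x}_2,\mathbf{y})}$ and applying the Cauchy--Schwarz inequality in $\mathbf{y}$ gives
\begin{equation*}
\abs{c_q(\mathbf{x}_1,\mathbf{x}_2)}^2 \le \Big(\int_\Omega \abs{g(\mathbf{x}_1,\mathbf{y})}^2\abs{q(\mathbf{y})}\,d\mathbf{y}\Big)\Big(\int_\Omega \abs{g(\mathbf{x}_2,\mathbf{y})}^2\abs{q(\mathbf{y})}\,d\mathbf{y}\Big).
\end{equation*}
Integrating over $(\mathbf{x}_1,\mathbf{x}_2)\in\mathbb{M}\times\mathbb{M}$ the right-hand side factorizes, and Fubini's theorem yields
\begin{equation*}
\norm{c_q}_{L^2(\mathbb{M}\times\mathbb{M})}^2 \le \Big(\int_\Omega \abs{q(\mathbf{y})}\,F(\mathbf{y})\,d\mathbf{y}\Big)^2, \qquad F(\mathbf{y}):=\int_{\mathbb{M}}\abs{g(\mathbf{x},\mathbf{y})}^2\,d\mathbf{x}.
\end{equation*}
Thus the statement reduces to a uniform bound $\sup_{\mathbf{y}\in\Omega}F(\mathbf{y})<\infty$: granting this, $\norm{c_q}_{L^2(\mathbb{M}\times\mathbb{M})}\le(\sup_\Omega F)\,\norm{q}_{L^1(\Omega)}\le(\sup_\Omega F)\,\abs{\Omega}^{1/2}\norm{q}_{L^2(\Omega)}<\infty$, which is the desired conclusion.

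The uniform bound on $F$ is the crux, and it rests on the local square-integrability of $g$. Because $\abs{\cdot}_{\mathbf{m}}$ is equivalent to the Euclidean norm (one checks $\beta\abs{\mathbf{z}}\le\abs{\mathbf{z}}_{\mathbf{m}}\le\abs{\mathbf{z}}$ directly from \eref{eq:machdist}), the explicit form \eref{eq:gf3d_convhelmeq} gives in three dimensions $\abs{g(\mathbf{x},\mathbf{y})}\le C\abs{\mathbf{x}-\mathbf{y}}^{-1}$, the exponential factor having modulus one. In two dimensions \eref{eq:gf2d_convhelmeq} together with the near-field asymptotics $H^{(1)}_0(z)=\frac{2i}{\pi}\log z+O(1)$ yields $\abs{g(\mathbf{x},\mathbf{y})}\le C\,(1+\abs{\log\abs{\mathbf{x}-\mathbf{y}}})$ on the bounded region under consideration. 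In both cases the majorant is square-integrable near the origin of $\mathbb{R}^d$, as $\abs{\mathbf{z}}^{-2}$ is locally integrable for $d=3$ and $(\log\abs{\mathbf{z}})^2$ for $d=2$. Enclosing $\mathbb{M}$ in a ball of radius $R=\mathrm{diam}(\overline\Omega\cup\overline{\mathbb{M}})$ about $\mathbf{y}$ and substituting $\mathbf{z}=\mathbf{x}-\mathbf{y}$, I would bound $F(\mathbf{y})\le\int_{\abs{\mathbf{z}}\le R}(\text{majorant})^2\,d\mathbf{z}=:C'$, a constant independent of $\mathbf{y}\in\overline\Omega$, which gives $\sup_\Omega F<\infty$.

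I expect the only genuine work to be the two-dimensional estimate, where the logarithmic singularity of the Hankel function must be extracted from \eref{eq:gf2d_convhelmeq}; the three-dimensional case is immediate from the $\abs{\mathbf{x}-\mathbf{y}}_{\mathbf{m}}^{-1}$ factor. It is worth noting that $\mathbf{x}=\mathbf{y}$ never actually occurs, since $\mathbb{M}\cap\overline\Omega=\emptyset$, so $g$ is in fact smooth on $\mathbb{M}\times\Omega$; nevertheless treating the singularity as above is precisely what keeps the bound on $F$ uniform up to the (possibly touching) boundaries of $\mathbb{M}$ and $\Omega$, and hence avoids any assumption of positive separation between the two regions.
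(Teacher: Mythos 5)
Your argument is correct and follows the same overall strategy as the paper's proof: both reduce the claim via the isometry \eref{eq:HSnormCq} to showing $c_q\in L^2(\mathbb{M}\times\mathbb{M})$ and then apply the Cauchy--Schwarz inequality in the $\mathbf{y}$-integral. The only real difference is how the resulting integral is shown to be finite. The paper estimates $\abs{c_q(\mathbf{x}_1,\mathbf{x}_2)}^2\le\norm{q}_{L^2(\Omega)}^2\int_\Omega\abs{\kappa(\mathbf{x}_1,\mathbf{x}_2,\mathbf{y})}^2\,d\mathbf{y}$ with $\kappa(\mathbf{x}_1,\mathbf{x}_2,\mathbf{y})=g(\mathbf{x}_1,\mathbf{y})\overline{g(\mathbf{x}_2,\mathbf{y})}$ and disposes of the finiteness question by noting that $\kappa$ is continuous on the bounded set $\mathbb{M}\times\mathbb{M}\times\Omega$, hence square-integrable; strictly speaking this step uses boundedness of $g$ on $\mathbb{M}\times\Omega$, i.e.\ it implicitly assumes a positive distance between the measurement and source regions. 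Your variant --- splitting $\abs{q}=\abs{q}^{1/2}\abs{q}^{1/2}$ before applying Cauchy--Schwarz and reducing everything to the uniform bound $\sup_{\mathbf{y}\in\overline\Omega}F(\mathbf{y})<\infty$, which you obtain from the explicit $\abs{\mathbf{x}-\mathbf{y}}^{-1}$ (resp.\ logarithmic) singularity of the Green's function and the norm equivalence $\beta\abs{\cdot}\le\abs{\cdot}_{\mathbf{m}}\le\abs{\cdot}$ --- costs a little more work but remains valid even when $\overline{\mathbb{M}}$ and $\overline{\Omega}$ touch, exactly as you note at the end. All the individual estimates (unimodularity of the oscillatory factors, local square-integrability of $\abs{\mathbf{z}}^{-1}$ in $d=3$ and of $\log\abs{\mathbf{z}}$ in $d=2$, the passage from $\norm{q}_{L^1}$ to $\abs{\Omega}^{1/2}\norm{q}_{L^2}$) are correct; there are no gaps.
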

\begin{proof}
See \ref{sec:appendix}.
\end{proof}


\section{Uniqueness result for the inverse problem}\label{sec:theory}
The main goal of this section is to prove a uniqueness result for the operator equation 
\begin{equation} \label{eq:opeq_uniqueness}
\mathcal{C}(q) =  C  
\end{equation}
for $C \in \textnormal{ran}\left( \mathcal{C} \right)$ and a bounded source power function $q \in L^{\infty}\left(\Omega \right) $.
The proof relies on several auxiliary statements which will be presented beforehand. For a constant and subsonic flow field $\mathbf{u}$, the Lorentz transformation (a.k.a. Prandtl-Glauert transformation in fluid dynamics) relates the Helmholtz equation to the convected Helmholtz equation. Therefore many well-known results for the Helmholtz equation are easily transferred to the case with convection.
\begin{prop}[Lorentz transformation] \label{prop:lorentztrafo}\ \\
Let $\mathbf{m} = (m_1, 0, \dots )^{\top}$ and $\mathbf{T}=\mathrm{diag}\left(\frac{1}{\beta},1, \dots \right)$. We consider an open domain $U\subset \mathbb{R}^d$ and functions $\phi_{0} \in C^2(U), Q \in C^0(U)$ with
\begin{equation*}
\left \lbrace  \frac{k^2}{\beta^2} + \Delta \right \rbrace \phi_0(\mathbf{x}) = -Q(\mathbf{x}) \quad \mathrm{on} \ U \ .
\end{equation*}
The transformed function
\begin{equation*}
\phi_{\mathbf{m}}(\mathbf{x}) = \mathrm{exp}\left( - \frac{\abs{\mathbf{m}}ik}{\beta^2} \mathbf{x}_1 \right)   \phi_0\left( \mathbf{T} \mathbf{x}  \right)
\end{equation*}
then solves the convected Helmholtz equation with a modified source term on a scaled domain. More precisely on $\mathbf{T}^{-1}(U)$ we have
\begin{equation}
\left \lbrace \left(k+i \mathbf{m} \cdot \nabla \right)^2  + \Delta \right \rbrace \phi_{\mathbf{m}}(\mathbf{x}) = -\mathrm{exp}\left( -\frac{\abs{\mathbf{m}}ik}{\beta^2} \mathbf{x}_1 \right)  Q\left( \mathbf{T}\mathbf{x}\right) \ . \label{eq:lorentzsol}
\end{equation}
\end{prop}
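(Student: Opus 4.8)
The statement is an identity between two differential operators acting on a specific transformed function, so the plan is a direct verification by the chain rule, organized to expose the cancellations that make the prefactor and the scaling $\mathbf{T}$ exactly the right choices. Throughout I write $m_1 = |\mathbf{m}|$ (the Mach vector points along the first axis) and abbreviate the phase by $\mu := |\mathbf{m}|k/\beta^2$, so that $\phi_{\mathbf{m}}(\mathbf{x}) = e^{-i\mu \mathbf{x}_1}\phi_0(\mathbf{T}\mathbf{x})$.

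First I would rewrite the convected operator in a form adapted to $\mathbf{m}$. Since $\mathbf{m}\cdot\nabla = m_1\partial_{x_1}$, expanding the square gives $(k + i\mathbf{m}\cdot\nabla)^2 + \Delta = k^2 + 2ikm_1\partial_{x_1} - m_1^2\partial_{x_1}^2 + \Delta$. Using $\beta^2 = 1 - m_1^2$ to absorb the $-m_1^2\partial_{x_1}^2$ term into the $x_1$-part of the Laplacian, this becomes $L_{\mathbf{m}} = k^2 + 2ikm_1\partial_{x_1} + \beta^2\partial_{x_1}^2 + \sum_{j\ge 2}\partial_{x_j}^2$.

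Next I would strip off the exponential prefactor by a conjugation identity. Since differentiating $e^{-i\mu \mathbf{x}_1}$ only affects the $x_1$-variable, one has $e^{i\mu \mathbf{x}_1}\partial_{x_1}(e^{-i\mu \mathbf{x}_1}\,\cdot) = \partial_{x_1} - i\mu$, hence $e^{i\mu \mathbf{x}_1}L_{\mathbf{m}}(e^{-i\mu \mathbf{x}_1}\,\cdot)$ is obtained from $L_{\mathbf{m}}$ by replacing $\partial_{x_1}$ with $\partial_{x_1}-i\mu$ (the $\partial_{x_j}$, $j\ge2$, are untouched). Expanding $2ikm_1(\partial_{x_1}-i\mu) + \beta^2(\partial_{x_1}-i\mu)^2$ and collecting terms is the crux of the computation: the coefficient of $\partial_{x_1}$ is $2i(km_1 - \mu\beta^2)$, which vanishes precisely because $\mu = m_1 k/\beta^2$, and the constant terms $k^2 + 2km_1\mu - \beta^2\mu^2$ collapse, again using $\mu = m_1k/\beta^2$ together with $\beta^2 + m_1^2 = 1$, to $k^2/\beta^2$. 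Thus the conjugated operator is simply $k^2/\beta^2 + \beta^2\partial_{x_1}^2 + \sum_{j\ge2}\partial_{x_j}^2$, with no first-order part left.

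Finally I would apply the chain rule to $\phi_0(\mathbf{T}\mathbf{x})$ with $\mathbf{T} = \mathrm{diag}(1/\beta,1,\dots)$: one gets $\partial_{x_1}^2[\phi_0(\mathbf{T}\mathbf{x})] = \beta^{-2}(\partial_1^2\phi_0)(\mathbf{T}\mathbf{x})$ and $\partial_{x_j}^2[\phi_0(\mathbf{T}\mathbf{x})] = (\partial_j^2\phi_0)(\mathbf{T}\mathbf{x})$ for $j\ge2$, so that $\beta^2\partial_{x_1}^2 + \sum_{j\ge2}\partial_{x_j}^2$ applied to $\phi_0(\mathbf{T}\mathbf{x})$ returns exactly $(\Delta\phi_0)(\mathbf{T}\mathbf{x})$. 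Combining with the previous step and invoking the hypothesis $(k^2/\beta^2 + \Delta)\phi_0 = -Q$ at the point $\mathbf{T}\mathbf{x}$ (which lies in $U$ precisely when $\mathbf{x}\in\mathbf{T}^{-1}(U)$, explaining the stated domain) yields $e^{i\mu\mathbf{x}_1}L_{\mathbf{m}}\phi_{\mathbf{m}}(\mathbf{x}) = -Q(\mathbf{T}\mathbf{x})$, which is the claimed identity \eref{eq:lorentzsol} after multiplying through by $e^{-i\mu\mathbf{x}_1}$. I expect no genuine obstacle beyond the bookkeeping; the only point requiring real care is that the specific values of the phase $\mu$ and the scaling $\mathbf{T}$ are forced by the \emph{simultaneous} requirements that the first-order term cancel and the zeroth-order coefficient reduce to $k^2/\beta^2$, so I would verify both cancellations explicitly rather than trust them.
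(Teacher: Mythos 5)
Your proof is correct, but it takes a different route from the paper. The paper does not verify the identity directly: it invokes the classical time-domain Lorentz (Prandtl--Glauert) transformation for the wave equation --- the coordinate change $(\mathbf{x},t)\mapsto(\mathbf{T}\mathbf{x},\,\beta t+\tfrac{|\mathbf{m}|}{\beta c}\mathbf{x}_1)$ maps solutions of $\Delta\psi_0-c^{-2}\partial_t^2\psi_0=-F$ to solutions of the convected wave equation --- and then specializes to the time-harmonic ansatz $\psi_0(\mathbf{x},t)=\phi_0(\mathbf{x})e^{-i\omega t/\beta}$, under which the time shift $\tfrac{|\mathbf{m}|}{\beta c}\mathbf{x}_1$ in the transformed time variable produces exactly the exponential prefactor $\exp(-\tfrac{|\mathbf{m}|ik}{\beta^2}\mathbf{x}_1)$. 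You instead work entirely in the frequency domain: conjugating the convected Helmholtz operator by the phase, checking that the choice $\mu=|\mathbf{m}|k/\beta^2$ simultaneously kills the first-order term and renormalizes the zeroth-order coefficient to $k^2/\beta^2$, and then absorbing the anisotropic second-order part $\beta^2\partial_{x_1}^2+\sum_{j\ge 2}\partial_{x_j}^2$ via the scaling $\mathbf{T}$. Your computations check out (the cancellations $km_1-\mu\beta^2=0$ and $k^2+2km_1\mu-\beta^2\mu^2=k^2/\beta^2$ are both correct, using $\beta^2+m_1^2=1$), and your argument has the advantage of being self-contained, whereas the paper's is shorter but rests on an external reference and on the reader trusting the time-harmonic reduction. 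The only point worth flagging is that your identification $m_1=|\mathbf{m}|$ silently assumes $m_1\ge 0$; this is the intended reading of the statement (one can always orient the first axis along the flow), but it is the convention that makes the phase sign in the proposition consistent with the cancellation you exploit.
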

\begin{proof}
A more general result for the wave equation in time domain holds with the coordinate transform
\begin{eqnarray*}
\mathbf{\widehat{T}}
\left(\left(\!\begin{array}{c}
\mathbf{x}\\t
\end{array}\!\right)\right)
&:= & \left(\!\begin{array}{c}\mathbf{T} \mathbf{x} \\
 \beta t + \frac{\abs{\mathbf{m}}}{\beta c} \mathbf{x}_1 
 \end{array}\!\right)\ ,
\end{eqnarray*} 
see e.g. \cite{Chapman2000, Gregory2015}. It states that if 
$
\left[ \Delta^2 - \frac{1}{c^2} \frac{\partial^2}{t^2} \right] \psi_0(\mathbf{x}, t) = -F(\mathbf{x}, t)
$
then the pullback $\psi_{\mathbf{m}}:=\psi_0\circ\mathbf{\widehat{T}}$
satisfies
\begin{equation*}
\left[  \Delta - \frac{1}{c^2} \left(  \frac{\partial}{\partial t} + \mathbf{u}_1 \frac{\partial}{\partial \mathbf{x}_1} \right)^2   \right] \psi_{\mathbf{m}} = -F\circ\mathbf{\widehat{T}} \ .
\end{equation*}
The statement \eref{eq:lorentzsol} is then obtained from this time domain result for the special case of time harmonic functions $\psi_0(\mathbf{x}, t) = \phi_0(\mathbf{x}) e^{-i\frac{\omega}{\beta} t}$. 
\end{proof}
For the convected Helmholtz equation propagation directions will be elements of  the unit sphere with respect to the Mach-norm given by
\begin{equation*}
\mathbb{S}_{d-1}(\mathbf{m}) := \lbrace \mathbf{x} \in \mathbb{R}^d: \  |\mathbf{x}|_{\mathbf{m}} = 1 \rbrace \ .
\end{equation*}
The next proposition describes the asymptotic behavior of the Green's functions \eref{eq:gf3d_convhelmeq}/\eref{eq:gf2d_convhelmeq} at infinity. 
\begin{prop}[Asymptotic behavior of Green's function]\label{prop:as_green} \ \\Let $B = B_R(\mathbf{0})$ be a ball, with radius $R$ such that $\Omega \subset B$, then for $\mathbf{x} \in \mathbb{R}^d \backslash \overline{B}$ and $\mathbf{y} \in B$ the following asymptotic representation holds true:
\begin{equation}\label{eq:asymp_gf}
 \fl g(\mathbf{x},\mathbf{y}) = C(d)h(\mathbf{x}) \abs{\mathbf{x}}_{\mathbf{m}}^{-\frac{(d-1)}{2}}  \textnormal{exp}\left( \frac{ik}{\beta^2} \left( \mathbf{m} -\mathbf{A}\hat{\mathbf{x}} \right) \cdot \mathbf{y} \right) 
 +  \mathcal{O}\left( |\mathbf{x}|^{-\frac{(d+1)}{2}} \right)  \quad {  \textnormal{as} \  |\mathbf{x}|\rightarrow \infty   }   
\end{equation}
with the auxiliary quantities
\begin{eqnarray*}
C(2) := \frac{e^{i\frac{\pi}{4}}}{\sqrt{8 \pi k}}, \quad &
h(\mathbf{x}) :=\textnormal{exp} \left(  \frac{ik}{\beta^2} \left( |\mathbf{x}|_{\mathbf{m}} - \mathbf{x} \cdot \mathbf{m} \right) \right),  &\\
C(3) := \frac{1}{4 \pi}, \quad &
 \mathbf{A} := \mathbf{m} \mathbf{m}^{\top} + \beta^2 \mathbf{I}, &
\hat{\mathbf{x}} := \frac{\mathbf{x}}{|\mathbf{x}|_{\mathbf{m}}}.  
\end{eqnarray*}
The asymptotic formula \eref{eq:asymp_gf} holds uniformly for all $\mathbf{y} \in B$ and all directions $\hat{\mathbf{x}} \in \mathbb{S}_{d-1}(\mathbf{m})$.
\end{prop}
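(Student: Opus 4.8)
The plan is to insert the large-$|\mathbf{x}|$ expansion of the Mach-scaled distance into the explicit Green's functions \eref{eq:gf3d_convhelmeq} and \eref{eq:gf2d_convhelmeq}. The starting observation is that the matrix $\mathbf{A}=\mathbf{m}\mathbf{m}^{\top}+\beta^2\mathbf{I}$ is exactly the Gram matrix of the Mach norm, i.e.\ $|\mathbf{z}|_{\mathbf{m}}^2=\mathbf{z}^{\top}\mathbf{A}\mathbf{z}$, so that the associated inner product is $\langle\mathbf{z},\mathbf{w}\rangle_{\mathbf{m}}=\mathbf{z}^{\top}\mathbf{A}\mathbf{w}$ and $\hat{\mathbf{x}}^{\top}\mathbf{A}\hat{\mathbf{x}}=1$ for $\hat{\mathbf{x}}\in\mathbb{S}_{d-1}(\mathbf{m})$. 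Writing $\rho:=|\mathbf{x}|_{\mathbf{m}}$ and expanding the square root in $|\mathbf{x}-\mathbf{y}|_{\mathbf{m}}=\rho\,(1-2\rho^{-1}(\mathbf{A}\hat{\mathbf{x}})\cdot\mathbf{y}+\rho^{-2}|\mathbf{y}|_{\mathbf{m}}^2)^{1/2}$ via the binomial series then yields the central estimate
\begin{equation*}
|\mathbf{x}-\mathbf{y}|_{\mathbf{m}}=|\mathbf{x}|_{\mathbf{m}}-(\mathbf{A}\hat{\mathbf{x}})\cdot\mathbf{y}+\mathcal{O}\!\left(|\mathbf{x}|^{-1}\right),
\end{equation*}
which, because $\mathbf{y}$ ranges over the bounded set $B$ and $\hat{\mathbf{x}}$ over the compact set $\mathbb{S}_{d-1}(\mathbf{m})$, holds uniformly in both variables. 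This single expansion supplies both the $\mathbf{y}$-dependence of the phase and the decay rate.

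For $d=3$ I would substitute this into \eref{eq:gf3d_convhelmeq} directly. In the exponent the term $-(\mathbf{x}-\mathbf{y})\cdot\mathbf{m}+|\mathbf{x}-\mathbf{y}|_{\mathbf{m}}$ splits as $(|\mathbf{x}|_{\mathbf{m}}-\mathbf{x}\cdot\mathbf{m})+(\mathbf{m}-\mathbf{A}\hat{\mathbf{x}})\cdot\mathbf{y}+\mathcal{O}(|\mathbf{x}|^{-1})$; exponentiating and using $e^{\mathcal{O}(|\mathbf{x}|^{-1})}=1+\mathcal{O}(|\mathbf{x}|^{-1})$ produces precisely $h(\mathbf{x})$ times the claimed $\mathbf{y}$-phase. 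For the denominator I expand $|\mathbf{x}-\mathbf{y}|_{\mathbf{m}}^{-1}=|\mathbf{x}|_{\mathbf{m}}^{-1}(1+\mathcal{O}(|\mathbf{x}|^{-1}))$. Collecting the factors gives $C(3)=\frac{1}{4\pi}$, the power $|\mathbf{x}|_{\mathbf{m}}^{-(d-1)/2}=|\mathbf{x}|_{\mathbf{m}}^{-1}$, and a relative error $\mathcal{O}(|\mathbf{x}|^{-1})$; since the leading term is of size $|\mathbf{x}|^{-1}$, the absolute remainder is $\mathcal{O}(|\mathbf{x}|^{-2})=\mathcal{O}(|\mathbf{x}|^{-(d+1)/2})$.

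For $d=2$ the same expansion enters through the large-argument asymptotics of the Hankel function, $H^{(1)}_0(z)=\sqrt{2/(\pi z)}\,e^{i(z-\pi/4)}(1+\mathcal{O}(z^{-1}))$ as $z\to\infty$, applied with $z=\frac{k}{\beta^2}|\mathbf{x}-\mathbf{y}|_{\mathbf{m}}\to\infty$. The amplitude $\sqrt{2/(\pi z)}$ contributes the factor $|\mathbf{x}-\mathbf{y}|_{\mathbf{m}}^{-1/2}=|\mathbf{x}|_{\mathbf{m}}^{-1/2}(1+\mathcal{O}(|\mathbf{x}|^{-1}))$, giving $|\mathbf{x}|_{\mathbf{m}}^{-(d-1)/2}$. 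The phase $e^{iz}$ combines with the convective factor $\exp(-\frac{ik}{\beta^2}(\mathbf{x}-\mathbf{y})\cdot\mathbf{m})$ in exactly the way seen in the three-dimensional case, again splitting into $h(\mathbf{x})$ and the $\mathbf{y}$-phase. The only genuinely new bookkeeping is the constant: one must check that $\frac{i}{4\beta}$ times $\sqrt{2\beta^2/(\pi k)}$ and the residual phase $e^{-i\pi/4}$ collapse to $C(2)=e^{i\pi/4}/\sqrt{8\pi k}$, which they do after $i\,e^{-i\pi/4}=e^{i\pi/4}$ and cancellation of $\beta$. The resulting relative error $\mathcal{O}(|\mathbf{x}|^{-1})$ against a leading term of size $|\mathbf{x}|^{-1/2}$ yields the absolute remainder $\mathcal{O}(|\mathbf{x}|^{-3/2})=\mathcal{O}(|\mathbf{x}|^{-(d+1)/2})$.

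I expect the main obstacle to be not any individual expansion --- each is a standard Taylor or special-function asymptotic --- but the careful propagation of the error terms so that all three sources of error (the square-root expansion of the distance, the denominator/amplitude expansion, and, in two dimensions, the Hankel remainder $\mathcal{O}(z^{-1})$) are simultaneously controlled and, crucially, \emph{uniform} in $\mathbf{y}\in B$ and $\hat{\mathbf{x}}\in\mathbb{S}_{d-1}(\mathbf{m})$. Uniformity follows because on the bounded set $B$ the quantities $(\mathbf{A}\hat{\mathbf{x}})\cdot\mathbf{y}$ and $|\mathbf{y}|_{\mathbf{m}}$ are bounded independently of $\hat{\mathbf{x}}$, and because $|\mathbf{x}-\mathbf{y}|_{\mathbf{m}}\geq|\mathbf{x}|_{\mathbf{m}}-\sup_{\mathbf{y}\in B}|\mathbf{y}|_{\mathbf{m}}$ forces $z\to\infty$ uniformly once $\mathbf{x}\notin\overline{B}$ has large modulus; verifying the exact value of $C(2)$ is the one place where an algebra slip would be easy to make.
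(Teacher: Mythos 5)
Your proof is correct, but it takes a genuinely different route from the paper. The paper deduces the proposition from Proposition \ref{prop:lorentztrafo}: assuming w.l.o.g.\ that $\mathbf{m}=(m_1,0,\dots)^{\top}$, the Lorentz transformation gives the exact identity $g_{\mathbf{m}}(\mathbf{x},\mathbf{y},k)=\frac{1}{\beta}\exp\left(-\frac{ik}{\beta^2}(\mathbf{x}-\mathbf{y})\cdot\mathbf{m}\right)g_0\left(\mathbf{T}\mathbf{x},\mathbf{T}\mathbf{y},\frac{k}{\beta}\right)$, and the claim then follows by inserting the classical far-field expansion of the free-space Helmholtz Green's function $g_0$ and translating back via $|\mathbf{T}\mathbf{z}|=|\mathbf{z}|_{\mathbf{m}}/\beta$. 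You instead expand $|\mathbf{x}-\mathbf{y}|_{\mathbf{m}}$ directly with the binomial series and substitute into \eref{eq:gf3d_convhelmeq} and \eref{eq:gf2d_convhelmeq}; the key identity $|\mathbf{z}|_{\mathbf{m}}^2=\mathbf{z}^{\top}\mathbf{A}\mathbf{z}$, the resulting expansion $|\mathbf{x}-\mathbf{y}|_{\mathbf{m}}=|\mathbf{x}|_{\mathbf{m}}-(\mathbf{A}\hat{\mathbf{x}})\cdot\mathbf{y}+\mathcal{O}(|\mathbf{x}|^{-1})$, the splitting of the phase into $h(\mathbf{x})$ times the $\mathbf{y}$-dependent plane-wave factor, the amplitude expansions, and the constant $C(2)$ all check out (indeed $\frac{i}{4\beta}\cdot\beta\sqrt{2/(\pi k)}\,e^{-i\pi/4}=e^{i\pi/4}/\sqrt{8\pi k}$). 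The trade-off: the paper's argument is shorter and reuses the $\mathbf{m}=\mathbf{0}$ asymptotics as a black box, at the price of the preliminary rotation of $\mathbf{m}$ and some bookkeeping in the transformed variables; your direct computation is self-contained, needs no choice of coordinates, and makes the uniformity in $\mathbf{y}\in B$ and $\hat{\mathbf{x}}\in\mathbb{S}_{d-1}(\mathbf{m})$ fully explicit, whereas the paper inherits it implicitly from the classical result.
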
 
\begin{proof}
For $\mathbf{m}=0$ this is a standard result, see \cite{Colton2013}. For $\abs{\mathbf{m}}>0$ assume w.l.o.g. that $\mathbf{m}=(m_1, 0,  \dots)^{\top}$. Let $g_0(\mathbf{x}, \mathbf{y}, k)$ (resp. $g_{\mathbf{m}}(\mathbf{x}, \mathbf{y}, k)$) denote the Green's function for the Helmholtz equation (resp. convected Helmholtz equation) at wavenumber $k$. In accordance with Proposition \ref{prop:lorentztrafo} we observe that 
\begin{eqnarray*}
g_{\mathbf{m}}(\mathbf{x},\mathbf{y}, k) &=& \frac{1}{\beta} \mathrm{exp}\left( -\frac{ik}{\beta^2} (\mathbf{x}-\mathbf{y}) \cdot \mathbf{m} \right) g_0\left(\mathbf{T}\mathbf{x}, \mathbf{T} \mathbf{y}, \frac{k}{\beta}  \right) \ .
\end{eqnarray*}
Using the asymptotic behavior of $g_0$ yields the claim.
\end{proof}

For the volume potential $w$ of a function $v \in L^2(\Omega)$
\begin{equation}\label{eq:volpot_def}
w(\mathbf{x}) = \int_{\Omega} v(\mathbf{y}) g(\mathbf{x},\mathbf{y}) d\mathbf{y} \ , 
\end{equation}
we immediately get a similar representation with the far field pattern 
\begin{equation}
w^{\infty}(\hat{\mathbf{x}}) = \int_{\Omega} \textnormal{exp}\left( \frac{ik}{\beta^2} \left( \mathbf{m} -\mathbf{A}\hat{\mathbf{x}} \right) \cdot \mathbf{y} \right) v(\mathbf{y}) d\mathbf{y} \ . \label{eq:farfield}
\end{equation}

\begin{cor}[Asymptotic representation of the volume potential]\label{cor:vol_pot} \ \\
With $w^\infty$ defined in \eref{eq:farfield} 
the volume potential $w$ in  \eref{eq:volpot_def} has the asymptotic behavior 
\begin{equation*}
w(\mathbf{x}) = C(d)h(\mathbf{x})\abs{\mathbf{x}}_{\mathbf{m}}^{-\frac{(d-1)}{2}} w^{\infty}(\hat{\mathbf{x}}) + \mathcal{O}\left( |\mathbf{x}|^{-\frac{(d+1)}{2}} \right)  \ 
\end{equation*}
\end{cor}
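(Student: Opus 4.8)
The plan is to substitute the uniform pointwise asymptotic expansion of the Green's function from Proposition~\ref{prop:as_green} directly into the integral defining the volume potential in \eref{eq:volpot_def} and then exploit linearity of the integral. Writing
\[
g(\mathbf{x},\mathbf{y}) = C(d)h(\mathbf{x}) \abs{\mathbf{x}}_{\mathbf{m}}^{-\frac{(d-1)}{2}} \textnormal{exp}\left( \frac{ik}{\beta^2} \left( \mathbf{m} -\mathbf{A}\hat{\mathbf{x}} \right) \cdot \mathbf{y} \right) + r(\mathbf{x},\mathbf{y}),
\]
the prefactor $C(d)h(\mathbf{x})\abs{\mathbf{x}}_{\mathbf{m}}^{-(d-1)/2}$ is independent of $\mathbf{y}$ and may be pulled out of the integral over $\Omega$. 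By the definition \eref{eq:farfield} of $w^{\infty}$, the integral of $v(\mathbf{y})$ against the remaining exponential factor is exactly $w^{\infty}(\hat{\mathbf{x}})$, which produces the claimed leading-order term.

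It then remains only to control the contribution $\int_{\Omega} v(\mathbf{y}) r(\mathbf{x},\mathbf{y})\,d\mathbf{y}$ of the remainder. The decisive point is that the error estimate in Proposition~\ref{prop:as_green} holds uniformly over $\mathbf{y}\in B\supset\Omega$, so there is a constant $K$ with $\abs{r(\mathbf{x},\mathbf{y})}\le K\abs{\mathbf{x}}^{-(d+1)/2}$ for all $\mathbf{y}\in\Omega$ and all sufficiently large $\abs{\mathbf{x}}$. An application of the Cauchy--Schwarz inequality then gives
\[
\abs{\int_{\Omega} v(\mathbf{y}) r(\mathbf{x},\mathbf{y})\,d\mathbf{y}} \le \norm{v}_{L^2(\Omega)}\,\norm{r(\mathbf{x},\cdot)}_{L^2(\Omega)} \le K\,\abs{\Omega}^{1/2}\,\norm{v}_{L^2(\Omega)}\,\abs{\mathbf{x}}^{-\frac{(d+1)}{2}},
\]
where I use that $\Omega$ is bounded and $v\in L^2(\Omega)$. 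This is precisely of order $\mathcal{O}\bigl(\abs{\mathbf{x}}^{-(d+1)/2}\bigr)$, and combining the two contributions yields the asserted representation.

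I do not expect any genuine obstacle here: once the uniformity in $\mathbf{y}$ recorded in Proposition~\ref{prop:as_green} is available, the corollary reduces to linearity together with a single Cauchy--Schwarz estimate, and the boundedness of $\Omega$ converts the uniform pointwise remainder into an $L^2$-remainder at no cost. The only point deserving a line of care is to note that $r(\mathbf{x},\cdot)$ is measurable and that the uniform bound is exactly what licenses interchanging the pointwise asymptotics with the integration; both follow at once from the construction of $r$ as the difference of the explicit Green's function and its leading term.
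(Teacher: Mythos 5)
Your proposal is correct and follows exactly the route of the paper's own (one-line) proof: insert the uniform asymptotic expansion of Proposition \ref{prop:as_green} into \eref{eq:volpot_def}, pull out the $\mathbf{y}$-independent prefactor to recover $w^{\infty}(\hat{\mathbf{x}})$, and use the uniformity of the remainder in $\mathbf{y}\in B$ together with Cauchy--Schwarz and the boundedness of $\Omega$ to absorb the error into $\mathcal{O}\bigl(\abs{\mathbf{x}}^{-(d+1)/2}\bigr)$. The paper leaves these details implicit; you have supplied them correctly.
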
 
\begin{proof}
Inserting the asymptotic representation \eref{eq:asymp_gf} from the last proposition into the definition of the volume potential yields the claim. 
\end{proof}

In order to characterize elements of the kernel of the volume potential operator $\mathcal{G}$ in \eref{eq:volpot_op}, we show that the volume potential $w$ in  \eref{eq:volpot_def} is real analytic outside of the source region. 
\begin{prop}[Analyticity of the volume potential]\label{prop:analyticity} \ \\The volume potential $w$ \eref{eq:volpot_def} is real analytic on $\mathbb{R}^d \backslash \overline{\Omega}$. 
\end{prop}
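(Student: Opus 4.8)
The plan is to realize $w$ as a solution of a homogeneous elliptic equation with constant (hence real-analytic) coefficients on $\mathbb{R}^d\setminus\overline{\Omega}$ and then to invoke analytic hypoellipticity. First I would record that the convected Helmholtz operator $L:=(k+i\mathbf{m}\cdot\nabla)^2+\Delta$ is elliptic for $|\mathbf{m}|<1$. Expanding $(k+i\mathbf{m}\cdot\nabla)^2=k^2+2ik\,\mathbf{m}\cdot\nabla-(\mathbf{m}\cdot\nabla)^2$, its principal symbol is $(\mathbf{m}\cdot\xi)^2-|\xi|^2$, and the Cauchy--Schwarz inequality together with $|\mathbf{m}|<1$ gives $(\mathbf{m}\cdot\xi)^2\le|\mathbf{m}|^2|\xi|^2<|\xi|^2$ for $\xi\neq\mathbf{0}$, so the principal symbol never vanishes off the origin. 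Thus $L$ is a second order elliptic operator with constant coefficients.

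Next I would verify that $Lw=0$ on $\mathbb{R}^d\setminus\overline{\Omega}$. By construction $g(\cdot,\mathbf{y})$ is the free field Green's function, i.e.\ $L_{\mathbf{x}}\,g(\cdot,\mathbf{y})=-\delta_{\mathbf{y}}$ in the sense of distributions. For $v\in L^2(\Omega)$ with $\Omega$ bounded, the volume potential $w$ is locally integrable, and extending $v$ by zero outside $\Omega$ one obtains $Lw=-v$ on all of $\mathbb{R}^d$ distributionally. Restricting to the open set $\mathbb{R}^d\setminus\overline{\Omega}$, where the extended source vanishes, yields $Lw=0$.

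Finally I would invoke the classical result that an elliptic operator with real-analytic (here constant) coefficients is analytically hypoelliptic, so that every distributional solution of $Lw=0$ on an open set is real analytic there (Morrey--Nirenberg). This immediately gives real analyticity of $w$ on $\mathbb{R}^d\setminus\overline{\Omega}$.

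The main point requiring care is the distributional identity $Lw=-v$, i.e.\ that $L$ may be brought under the integral across the kernel singularity at $\mathbf{x}=\mathbf{y}$; this is the standard fundamental-solution computation. Alternatively one can avoid elliptic regularity altogether by a direct holomorphic-extension argument: for fixed $\mathbf{x}_0\notin\overline{\Omega}$ the distance $|\mathbf{x}_0-\mathbf{y}|$ is bounded below uniformly in $\mathbf{y}\in\Omega$, so the Mach distance stays away from zero and its branch cut, and the kernel $g(\mathbf{x},\mathbf{y})$ extends holomorphically in $\mathbf{x}$ to a complex neighborhood of $\mathbf{x}_0$, uniformly in $\mathbf{y}$; uniform convergence of the integral then shows that $w$ itself extends holomorphically, which is equivalent to real analyticity. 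I regard the ellipticity-plus-hypoellipticity route as the cleaner one, with the distributional identity as the only step needing justification.
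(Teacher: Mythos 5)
Your argument is correct, but it takes a genuinely different route from the paper. The paper handles the case $\mathbf{m}=\mathbf{0}$ by citing the classical fact that $C^2$ solutions of the homogeneous Helmholtz equation are analytic, and then transfers the result to $|\mathbf{m}|>0$ via the Lorentz (Prandtl--Glauert) transformation of Proposition \ref{prop:lorentztrafo}, which conjugates the convected Helmholtz operator to the ordinary one. You instead work with the convected operator directly: you verify that its principal symbol $(\mathbf{m}\cdot\boldsymbol{\xi})^2-|\boldsymbol{\xi}|^2$ is nonvanishing for $\boldsymbol{\xi}\neq\mathbf{0}$ when $|\mathbf{m}|<1$ (correct), that $w$ solves the homogeneous equation off $\overline{\Omega}$, and then invoke Morrey--Nirenberg analytic hypoellipticity for elliptic operators with real-analytic coefficients. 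This is cleaner in that it needs no special algebraic structure and would apply verbatim to any constant-coefficient elliptic perturbation, at the price of citing a heavier general theorem; the paper's reduction buys an elementary textbook citation (Colton--Kress) but only works because the convected operator happens to be a coordinate-and-gauge transform of the Helmholtz operator. One small simplification you could make: to get $Lw=0$ on $\mathbb{R}^d\setminus\overline{\Omega}$ you do not need the global distributional identity $Lw=-v$ across the kernel singularity; since $\mathbf{x}$ ranges over a set at positive distance from $\Omega$, the integrand is jointly smooth and you may differentiate under the integral, using only $L_{\mathbf{x}}g(\cdot,\mathbf{y})=0$ for $\mathbf{x}\neq\mathbf{y}$. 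Your alternative holomorphic-extension argument for the kernel is also sound and is close in spirit to a Sobolev-norm-growth proof the authors drafted but did not include.
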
 
\begin{proof}
For $\mathbf{m}=0$ this is again well-known since $w$ is a $C^2$ solution of the homogeneous Helmholtz equation on $U:=\mathbb{R}^d \backslash \overline{\Omega}$ and therefore analytic (see \cite[Theorem 2.2]{Colton2013} for $d=3$ and \cite[Theorem 3.2]{Cakoni2006} for $d=2$). For $\abs{\mathbf{m}}>0$ the statement is obtained using the Lorentz transformation. 
\end{proof}
Now we can identify a set of specific plane waves as a subset of the closure of the range of the adjoint volume potential operator $ \mathcal{G}^*$.
\begin{prop}[Plane waves] \label{prop:plane_waves} \ \\
For the plane wave functions
\begin{equation*}
u_{\mathbf{m}, \hat{\mathbf{x}} }(\mathbf{y}) :=   \textnormal{exp}\left( \frac{ik}{\beta^2} \left(\mathbf{A} \hat{\mathbf{x}} - \mathbf{m} \right) \cdot \mathbf{y} \right) \ ,
\end{equation*}
the following inclusion holds true
\begin{equation*}
\mathcal{W} := \left \lbrace u_{\mathbf{m}, \hat{\mathbf{x}} } \  : \ \ \hat{\mathbf{x}} \in \mathbb{S}_{d-1}(\mathbf{m}) \right \rbrace \subset \overline{ \textnormal{ran}\left( \mathcal{G}^* \right)} \ .
\end{equation*}
\end{prop}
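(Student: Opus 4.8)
The plan is to reduce the inclusion to an orthogonality statement via the standard Hilbert-space identity $\overline{\textnormal{ran}(\mathcal{G}^*)} = (\ker \mathcal{G})^{\perp}$, where $\mathcal{G}:L^2(\Omega)\to L^2(\mathbb{M})$ is the volume potential operator \eref{eq:volpot_op}. Since the plane waves $u_{\mathbf{m},\hat{\mathbf{x}}}$ are functions of $\mathbf{y}$ and hence elements of $L^2(\Omega)$, the claim $\mathcal{W}\subset\overline{\textnormal{ran}(\mathcal{G}^*)}$ is equivalent to $u_{\mathbf{m},\hat{\mathbf{x}}}\perp\ker\mathcal{G}$ for each fixed $\hat{\mathbf{x}}\in\mathbb{S}_{d-1}(\mathbf{m})$. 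So I would fix $\hat{\mathbf{x}}$, take an arbitrary $v\in\ker\mathcal{G}$, and aim to show $\langle u_{\mathbf{m},\hat{\mathbf{x}}}, v\rangle_{L^2(\Omega)} = 0$.

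The bridge between $v$ and the plane waves is the far field pattern. A direct computation gives $\overline{u_{\mathbf{m},\hat{\mathbf{x}}}(\mathbf{y})} = \textnormal{exp}\left(\frac{ik}{\beta^2}(\mathbf{m}-\mathbf{A}\hat{\mathbf{x}})\cdot\mathbf{y}\right)$, so that the far field $w^{\infty}$ of the volume potential $w$ of $v$, as defined in \eref{eq:farfield}, is precisely $w^{\infty}(\hat{\mathbf{x}}) = \langle v, u_{\mathbf{m},\hat{\mathbf{x}}}\rangle_{L^2(\Omega)}$. By conjugate symmetry of the inner product it therefore suffices to prove $w^{\infty}\equiv 0$ on $\mathbb{S}_{d-1}(\mathbf{m})$.

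To obtain this, I would use that $v\in\ker\mathcal{G}$ means exactly that $w$ vanishes on the measurement region $\mathbb{M}$. By Proposition \ref{prop:analyticity}, $w$ is real analytic on $\mathbb{R}^d\backslash\overline{\Omega}$, which is connected by the standing assumption on $\Omega$; since $\mathbb{M}$ is a nonempty open subset, the identity theorem for real-analytic functions forces $w\equiv 0$ on all of $\mathbb{R}^d\backslash\overline{\Omega}$. In particular $w$ vanishes for large $\abs{\mathbf{x}}$, and extracting the leading-order coefficient in Corollary \ref{cor:vol_pot} — multiply by $\abs{\mathbf{x}}_{\mathbf{m}}^{(d-1)/2}$, use $\abs{h(\mathbf{x})}=1$ and $C(d)\neq 0$, and let $\abs{\mathbf{x}}\to\infty$ along the ray through $\hat{\mathbf{x}}$ — yields $w^{\infty}(\hat{\mathbf{x}})=0$. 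As $\hat{\mathbf{x}}$ was arbitrary, $w^{\infty}\equiv 0$, which closes the argument.

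The step I expect to need the most care is the unique-continuation passage from ``$w=0$ on $\mathbb{M}$'' to ``$w=0$ on the entire exterior'': it rests essentially on the connectedness of $\mathbb{R}^d\backslash\overline{\Omega}$ together with the real analyticity furnished by Proposition \ref{prop:analyticity}. Everything else — the abstract range/kernel duality, the identification of $w^{\infty}$ with the $L^2$ pairing against $u_{\mathbf{m},\hat{\mathbf{x}}}$, and the readout of the far field from the asymptotics — is routine given the earlier results.
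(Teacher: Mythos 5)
Your proposal is correct and follows essentially the same route as the paper: reduce the inclusion to $\mathcal{W}\subset\ker(\mathcal{G})^{\perp}$, use Proposition \ref{prop:analyticity} and the connectedness of $\mathbb{R}^d\backslash\overline{\Omega}$ to propagate the vanishing of the volume potential from $\mathbb{M}$ to the whole exterior, read off $w^{\infty}\equiv 0$ from Corollary \ref{cor:vol_pot}, and identify $w^{\infty}(\hat{\mathbf{x}})$ with the inner product $\langle v, u_{\mathbf{m},\hat{\mathbf{x}}}\rangle_{L^2(\Omega)}$. The only difference is that you spell out the extraction of the far field coefficient from the asymptotics slightly more explicitly than the paper does.
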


\begin{proof}
We will show that $\mathcal{W} \subset \textnormal{ker}\left( \mathcal{G} \right)^{\perp}$ which is equivalent to the claim. Assume that $v \in \textnormal{ker}\left( \mathcal{G} \right)$, i.e.\ the volume potential $w(\mathbf{x}) = \int_{\Omega} g(\mathbf{x},\mathbf{y}) v(\mathbf{y}) d\mathbf{y} $ vanishes on $\mathbb{M}$. Due to Proposition \ref{prop:analyticity}, $w$ is analytic and since it vanishes on the open set $\mathbb{M}$ and $\mathbb{R}^d \backslash \overline{\Omega}$ is connected, it must vanish on all of $\mathbb{R}^d \backslash \overline{\Omega}$ by analytic continuation \cite[p. 65]{John1982}. The representation formula of Corollary \ref{cor:vol_pot}, 
\begin{equation*}
w(\mathbf{x}) = C(d)h(\mathbf{x}) \abs{\mathbf{x}}_{\mathbf{m}}^{-\frac{(d-1)}{2}} w^{\infty}(\hat{\mathbf{x}}) + \mathcal{O}\left( |\mathbf{x}|^{-\frac{(d+1)}{2}} \right)
\end{equation*} 
implies that the far field pattern $w^{\infty}$ must also vanish identically. 
Together with the definition of the far field pattern we obtain
\begin{equation*}
0 = w^{\infty}( \hat{\mathbf{x}}) = \int_{\Omega} \overline{ u_{\mathbf{m}, \hat{\mathbf{x}} }(\mathbf{y})} v(\mathbf{y}) d \mathbf{y} = \left \langle  v, u_{\mathbf{m}, \hat{\mathbf{x}} } \right \rangle_{L^2(\Omega)}\,,
\end{equation*}
which shows that $\mathcal{W} \subset \textnormal{ker}\left( \mathcal{G} \right)^{\perp}$.  
\end{proof}
Finally, we are in the position to prove the uniqueness statement for bounded sources. 
\begin{thm}[Uniqueness] \label{thm:uniqueness} \ \\If $q_1, q_2 \in L^ {\infty}\left(\Omega \right)$ such that $\mathcal{C}(q_1) = \mathcal{C}(q_2)$, then  \begin{equation*}
q_1 = q_2 \ .
\end{equation*}
 \end{thm}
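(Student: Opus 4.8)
The plan is to exploit the bilinearity of $\mathcal{C}$ together with the plane-wave inclusion of Proposition \ref{prop:plane_waves} and the Paley--Wiener theorem. First I would pass to a homogeneous formulation: since $q \mapsto M_q$ is linear, so is $\mathcal{C}$, and hence $\mathcal{C}(q_1) = \mathcal{C}(q_2)$ is equivalent to $\mathcal{C}(q) = 0$ for $q := q_1 - q_2 \in L^{\infty}(\Omega)$. It therefore suffices to show that $\mathcal{C}(q) = \mathcal{G} M_q \mathcal{G}^* = 0$ forces $q = 0$. Testing the operator identity against $\varphi, \psi \in L^2(\mathbb{M})$ gives $0 = \langle \mathcal{G} M_q \mathcal{G}^* \varphi, \psi \rangle = \langle M_q \mathcal{G}^* \varphi, \mathcal{G}^* \psi \rangle$, i.e.\ the sesquilinear form $(u,v) \mapsto \int_{\Omega} q\, u\, \overline{v}\, d\mathbf{y}$ vanishes for all $u,v \in \textnormal{ran}(\mathcal{G}^*)$. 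Because $q \in L^{\infty}(\Omega)$ renders $M_q$ bounded, this form is continuous on $L^2(\Omega) \times L^2(\Omega)$, so it vanishes on $\overline{\textnormal{ran}(\mathcal{G}^*)} \times \overline{\textnormal{ran}(\mathcal{G}^*)}$.

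Next I would insert the plane waves. By Proposition \ref{prop:plane_waves}, $u_{\mathbf{m},\hat{\mathbf{x}}} \in \overline{\textnormal{ran}(\mathcal{G}^*)}$ for every $\hat{\mathbf{x}} \in \mathbb{S}_{d-1}(\mathbf{m})$, so choosing $u = u_{\mathbf{m},\hat{\mathbf{x}}_1}$ and $v = u_{\mathbf{m},\hat{\mathbf{x}}_2}$ yields
\begin{equation*}
0 = \int_{\Omega} q(\mathbf{y})\, u_{\mathbf{m},\hat{\mathbf{x}}_1}(\mathbf{y})\, \overline{u_{\mathbf{m},\hat{\mathbf{x}}_2}(\mathbf{y})}\, d\mathbf{y} = \int_{\Omega} q(\mathbf{y})\, \textnormal{exp}\left( \frac{ik}{\beta^2} \mathbf{A}(\hat{\mathbf{x}}_1 - \hat{\mathbf{x}}_2) \cdot \mathbf{y} \right) d\mathbf{y} ,
\end{equation*}
where the two $\mathbf{m}$-contributions in the exponents cancel. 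Extending $q$ by zero to all of $\mathbb{R}^d$ (legitimate since $\Omega$ is bounded and $q \in L^{\infty}(\Omega) \subset L^1(\Omega)$), this states precisely that the Fourier transform $\hat{q}$ vanishes at every frequency of the form $\boldsymbol{\xi} = -\frac{k}{\beta^2} \mathbf{A}(\hat{\mathbf{x}}_1 - \hat{\mathbf{x}}_2)$ with $\hat{\mathbf{x}}_1, \hat{\mathbf{x}}_2 \in \mathbb{S}_{d-1}(\mathbf{m})$.

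The crucial step, and the one I expect to be the main obstacle, is to verify that this set of frequencies contains a nonempty open set. Here I would use that $\mathbb{S}_{d-1}(\mathbf{m}) = \lbrace \mathbf{x} : \mathbf{x}^{\top} \mathbf{A} \mathbf{x} = 1 \rbrace$ with $\mathbf{A} = \mathbf{m}\mathbf{m}^{\top} + \beta^2 \mathbf{I}$ symmetric and positive definite, so that $\mathbf{A}^{1/2}$ is a linear isomorphism carrying the Mach-sphere bijectively onto the Euclidean unit sphere $S^{d-1}$. Writing $\hat{\mathbf{x}}_j = \mathbf{A}^{-1/2} \mathbf{z}_j$ with $\mathbf{z}_j \in S^{d-1}$, the frequencies become $\boldsymbol{\xi} = -\frac{k}{\beta^2} \mathbf{A}^{1/2}(\mathbf{z}_1 - \mathbf{z}_2)$. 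Since the difference set $\lbrace \mathbf{z}_1 - \mathbf{z}_2 : \mathbf{z}_1, \mathbf{z}_2 \in S^{d-1} \rbrace$ equals the closed ball of radius $2$ (given $\abs{\mathbf{v}} \le 2$, take $\mathbf{z}_{1,2} = \mathbf{a} \pm \frac12 \mathbf{v}$ with $\mathbf{a} \perp \mathbf{v}$ and $\abs{\mathbf{a}}^2 = 1 - \frac14 \abs{\mathbf{v}}^2$, which exists for $d \ge 2$), and $\mathbf{A}^{1/2}$ is invertible, the set of admissible $\boldsymbol{\xi}$ is a full-dimensional ellipsoid and hence contains a ball $B_{\rho}(\mathbf{0})$ about the origin, e.g.\ with $\rho = 2k/(\beta^2 \norm{\mathbf{A}^{-1/2}})$.

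Finally I would invoke analyticity. As $q$ has compact support and lies in $L^1(\mathbb{R}^d)$, the Paley--Wiener theorem shows that $\hat{q}$ extends to an entire function on $\mathbb{C}^d$; in particular its restriction to $\mathbb{R}^d$ is real analytic. Vanishing on the open set $B_{\rho}(\mathbf{0})$ then forces $\hat{q} \equiv 0$ by the identity theorem, and Fourier inversion yields $q = 0$, that is $q_1 = q_2$. The only delicate point beyond routine bookkeeping is the open-set claim for the difference of Mach-spheres; everything else is the continuity extension, the exponent cancellation, and a standard analyticity argument.
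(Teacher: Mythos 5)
Your proposal is correct and follows essentially the same route as the paper's proof: linearity reduction to $\mathcal{C}(q)=0$, testing to obtain the vanishing sesquilinear form on $\textnormal{ran}(\mathcal{G}^*)$, extension to the closure by boundedness of $M_q$, insertion of the plane waves from Proposition \ref{prop:plane_waves}, the observation that the resulting frequency set contains an open neighbourhood of the origin, and analytic continuation of the compactly supported $\hat{q}$. Your treatment of the open-set step via $\mathbf{A}^{1/2}$ and the explicit difference-set construction is in fact slightly more detailed than the paper's, which simply notes that the Mach-ball of radius $2$ contains a Euclidean open set and that $\mathbf{A}$ is a homeomorphism.
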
 
\begin{proof}
Due to linearity is suffices to show that $\mathcal{C}(q) \equiv 0$ implies $q=0$. So let
$q \in L^{\infty}\left( \Omega \right)$ such that $\mathcal{C}(q) \equiv 0$. Then 
\begin{equation*}
0 =  \left \langle \mathcal{G} M_q \mathcal{G}^* v_1, v_2  \right \rangle_{L^2(\mathbb{M})} = \left \langle  M_q \mathcal{G}^* v_1, \mathcal{G}^*v_2  \right \rangle_{L^2(\Omega)}
\end{equation*}
for all $v_1,v_2 \in L^2(\mathbb{M})$ and hence 
\begin{equation} \label{eq:range_prop}
\left \langle  q u_1, u_2  \right \rangle_{L^2(\Omega)} = 0 \ \ \ \ \mbox{for all } u_1,u_2 \ \in \ \textnormal{ran}\left(  \mathcal{G}^* \right) \ .
\end{equation}
Since $q \in L^{\infty}(\Omega)$, we can apply a density argument to show that property \eref{eq:range_prop} holds also for elements of $\overline{\textnormal{ran}\left(  \mathcal{G}^* \right) }$. By Proposition \ref{prop:plane_waves} we can choose for $u_1$ and $u_2$ plane waves of the form $u_{\mathbf{m}, \hat{\mathbf{x}}_1}, u_{\mathbf{m}, \hat{\mathbf{x}}_2}$ with $\hat{\mathbf{x}}_1, \hat{\mathbf{x}}_2 \in  \mathbb{S}_{d-1}(\mathbf{m}) $. Together with \eref{eq:range_prop} this implies
\begin{equation} \label{eq:plane_eq}
0 = \left \langle q u_{\mathbf{m}, \hat{\mathbf{x}}_1}, u_{\mathbf{m}, \hat{\mathbf{x}}_2} \right \rangle_{L^2(\Omega)} = 
\int_{\Omega} q(\mathbf{y}) \textnormal{exp} \left(  \frac{-ik}{\beta^2} \mathbf{A} (\hat{\mathbf{x}}_2-\hat{\mathbf{x}}_1) \cdot \mathbf{y} \right) d\mathbf{y}  \ .
\end{equation}
Note that 
\begin{equation*}
\lbrace \hat{\mathbf{x}}_2 - \hat{\mathbf{x}}_1 \ : \  \hat{\mathbf{x}}_1, \hat{\mathbf{x}}_2 \in \mathbb{S}_{d-1}(\mathbf{m}) \rbrace = \lbrace \mathbf{x}  \in \mathbb{R}^d: \ |\mathbf{x}|_{\mathbf{m}} \leq 2 \rbrace 
\end{equation*}
contains an open set $O$ with respect to $| \cdot |$ and the set $V = \frac{k}{\beta^2}\mathbf{A}(O)$ is also open
as $\mathbf{A}$ is a homeomorphism on $\mathbb{R}^d$. 
Extending $q$ by zero to the whole space (denoting the extension again by $q$) and using \eref{eq:plane_eq} we obtain
\begin{equation} \label{eq:q_fourier}
\hat{q}(\boldsymbol{\xi}) = \int_{\mathbb{R}^d} q(y) e^{-i\boldsymbol{\xi} \cdot \mathbf{y}} d \mathbf{y}  = 0 \ \ \ \ \ \textnormal{for} \  \boldsymbol{\xi} \in V \ ,
\end{equation}
i.e.\ the Fourier transform of $q$ vanishes on the open set $V$. Since $q$ has compact support, $\hat{q}$ is real analytic \cite[Section IX.3]{Reed1975}, and hence it must vanish everywhere by analytic continuation \cite[p. 65]{John1982}. Since the Fourier transform is injective, we obtain that $q=0$.
\end{proof}

\begin{rem}[Correlated sources] \ \\
One may ask if it is also possible to identify a 
non-diagonal source covariance operator $S:L^2(\Omega)\to L^2(\Omega)$ 
describing 
correlated sources from corresponding  correlation data
\begin{equation}
\mathcal{C}(S) = \mathcal{G} S \mathcal{G}^* . \label{eq:corrfwd} 
\end{equation}
\begin{enumerate}
    \item \emph{Non-uniqueness without priori information:} We first show that 
    this is not possible without prior information on $S$.
As already mentioned in the introduction, the operator $\mathcal{G}$ has a non-trivial kernel $\textnormal{ker}\left( \mathcal{G} \right)$ (i.e.\  deterministic sources in $\Omega$ are not uniquely determined from their generated wave fields in $\mathbb{M}$).  
As covariance operators are self-adjoint and positive semi-definite, 
we can decompose $\mathcal{C}(S)$ into $\mathcal{C}(S) = L L^*$ with $L:= \mathcal{G}S^{\nicefrac{1}{2}}$. But there exist positive semi-definite operators $S^{\nicefrac{1}{2}} \neq 0$ 
such that $\textnormal{ran} \left(  S^{\nicefrac{1}{2}} \right) \subset \textnormal{ker} \left( \mathcal{G} \right)$. For such operators we have  $\mathcal{C}(S) = 0$, i.e.\ the forward operator in \eref{eq:corrfwd} is not injective. 
\item  \emph{Structured correlations:} One may ask if one can identify a source covariance operator $S$ if a certain structure of $S$ is a-priori known. E.g.\ one might 
assume that $S$ is a convolution operator or the form $S=\tilde{S}M_q\tilde{S}^*$ with some 
known operator $\tilde{S}$. However, the extension of our uniqueness result to such situations 
seems to require new ideas since our proof relies on the fact that the span of products 
$\overline{u_1}u_2$ with $u_1,u_2\in \mathrm{ran}(\mathcal{G}^*)$ is dense whereas $\mathrm{ran}(\mathcal{G}^*)$ is not dense, and it is not obvious if this result remains true if 
an operator is applied to $u_1$ or $u_2$ or both.
\end{enumerate}
\end{rem}


\section{A continuous perspective on common reconstruction methods}\label{sec:incorporation}
In this section we will analyze three common source reconstruction methods that are used for aeroacoustic measurement data, namely \textit{Covariance Matrix Fitting} (CMF) (also known as \textit{spectral estimation method}) \cite{Blacodon2004, Yardibi2008}, \textit{Conventional Beamforming} (CBF) \cite{Veen1988, Johnson1993} and \textit{DAMAS} \cite{Brooks2006}. All three methods can be generalized to the infinite dimensional framework, presented in the last two sections. For a broader overview on microphone array techniques for aeroacoustic purposes we refer to \cite{Leclre2017, Merino-Martinez2019}. 

\subsection{The adjoint of the forward operator}
Before we start with the specific source reconstruction approaches, we need to characterize the adjoint of the forward operator.
\begin{prop}[Adjoint forward operator] \label{prop:adj_fwd}
The adjoint of the forward operator 
\begin{equation*}
\mathcal{C}: L^2(\Omega) \rightarrow \HS(L^2(\mathbb{M})), \ \ \ \ \ \ q \mapsto \mathcal{C}(q)
\end{equation*}
is given by  
\begin{equation*}
\mathcal{C}^*:   \HS(L^2(\mathbb{M})) \rightarrow L^2(\Omega), \ \ \ \ \ \ \left( \mathcal{C}^*K \right)(\mathbf{y}) = \left \langle K, \mathcal{P}_{\mathbf{y}}  \right \rangle_{\HS} ,
\end{equation*}
for $K\in \HS(L^2(\mathbb{M}))$ and $\mathbf{y}\in \Omega$ 
with the monopole operator $P_{\mathbf{y}}\in \HS(L^2(\mathbb{M}))$ defined by 
\begin{equation*}
\left( \mathcal{P}_{\mathbf{y}} \varphi \right)(\mathbf{x}_1) = \int_{\mathbb{M}} g(\mathbf{x}_1, \mathbf{y}) \overline{g(\mathbf{x}_2, \mathbf{y})} \varphi(\mathbf{x}_2) d\mathbf{x}_2 
\end{equation*}
for $\varphi\in L^2(\mathbb{M})$ and $\mathbf{x}_1\in \mathbb{M}$. 
\end{prop}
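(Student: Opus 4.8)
The plan is to verify the claimed formula directly from the defining relation of the adjoint, namely $\langle \mathcal{C}(q), K\rangle_{\HS} = \langle q, \mathcal{C}^*K\rangle_{L^2(\Omega)}$ for all $q\in L^2(\Omega)$ and $K\in\HS(L^2(\mathbb{M}))$, working throughout with kernel functions. The essential tool is the fact recalled around \eref{eq:HSnormCq}: the Hilbert--Schmidt inner product of two integral operators equals the $L^2(\mathbb{M}\times\mathbb{M})$ inner product of their kernels. Writing $\kappa$ for the kernel of $K$, and recalling that $\mathcal{C}(q)$ has kernel $c_q$ and that $\mathcal{P}_{\mathbf{y}}$ has kernel $g(\mathbf{x}_1,\mathbf{y})\overline{g(\mathbf{x}_2,\mathbf{y})}$, this reduces the whole statement to a computation with ordinary integrals.

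First I would record the two preparatory facts that make the formula meaningful. For fixed $\mathbf{y}\in\Omega$ the function $g(\cdot,\mathbf{y})$ lies in $L^2(\mathbb{M})$, since the Green's function is smooth on $\mathbb{M}$ for $\mathbf{y}$ in the separated source region; hence its rank-one kernel $g(\mathbf{x}_1,\mathbf{y})\overline{g(\mathbf{x}_2,\mathbf{y})}$ is in $L^2(\mathbb{M}\times\mathbb{M})$, so $\mathcal{P}_{\mathbf{y}}\in\HS(L^2(\mathbb{M}))$ with $\norm{\mathcal{P}_{\mathbf{y}}}_{\HS} = \norm{g(\cdot,\mathbf{y})}^2_{L^2(\mathbb{M})}$. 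Consequently $\mathbf{y}\mapsto \langle K,\mathcal{P}_{\mathbf{y}}\rangle_{\HS}$ is measurable, and the Cauchy--Schwarz estimate $\abs{\langle K,\mathcal{P}_{\mathbf{y}}\rangle_{\HS}}\le \norm{K}_{\HS}\,\norm{g(\cdot,\mathbf{y})}^2_{L^2(\mathbb{M})}$, together with the bounds on $g$ over $\mathbb{M}\times\Omega$ established in the proof of Proposition \ref{prop:HS_cov}, shows that this function lies in $L^2(\Omega)$. Thus $\mathcal{C}^*K$ is a well-defined element of $L^2(\Omega)$.

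Next I would carry out the main computation. Starting from
\begin{equation*}
\langle \mathcal{C}(q), K\rangle_{\HS} = \int_{\mathbb{M}}\int_{\mathbb{M}} c_q(\mathbf{x}_1,\mathbf{x}_2)\,\overline{\kappa(\mathbf{x}_1,\mathbf{x}_2)}\,d\mathbf{x}_1\,d\mathbf{x}_2 ,
\end{equation*}
I substitute $c_q(\mathbf{x}_1,\mathbf{x}_2) = \int_{\Omega} g(\mathbf{x}_1,\mathbf{y}) q(\mathbf{y}) \overline{g(\mathbf{x}_2,\mathbf{y})}\,d\mathbf{y}$ and interchange the $\mathbf{y}$-integration with the integrations over $\mathbf{x}_1,\mathbf{x}_2$. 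Pulling out $q(\mathbf{y})$ leaves the inner integral $\int_{\mathbb{M}}\int_{\mathbb{M}} g(\mathbf{x}_1,\mathbf{y})\overline{g(\mathbf{x}_2,\mathbf{y})}\,\overline{\kappa(\mathbf{x}_1,\mathbf{x}_2)}\,d\mathbf{x}_1\,d\mathbf{x}_2$, which is precisely $\overline{\langle K,\mathcal{P}_{\mathbf{y}}\rangle_{\HS}}$. Hence $\langle \mathcal{C}(q),K\rangle_{\HS} = \int_\Omega q(\mathbf{y})\,\overline{\langle K,\mathcal{P}_{\mathbf{y}}\rangle_{\HS}}\,d\mathbf{y} = \langle q,\mathcal{C}^*K\rangle_{L^2(\Omega)}$, and since this holds for every $q\in L^2(\Omega)$ we read off $(\mathcal{C}^*K)(\mathbf{y}) = \langle K,\mathcal{P}_{\mathbf{y}}\rangle_{\HS}$, as claimed.

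The main obstacle is the justification of the interchange of the order of integration (Fubini--Tonelli); everything else is bookkeeping with complex conjugates. I would control the triple integral of absolute values by applying Cauchy--Schwarz in $(\mathbf{x}_1,\mathbf{x}_2)$ to bound the inner integral by $\norm{\kappa}_{L^2(\mathbb{M}\times\mathbb{M})}\,\norm{g(\cdot,\mathbf{y})}^2_{L^2(\mathbb{M})}$, and then integrating $\abs{q(\mathbf{y})}\,\norm{g(\cdot,\mathbf{y})}^2_{L^2(\mathbb{M})}$ over $\Omega$; finiteness follows from $q\in L^2(\Omega)$ and the square-integrability of $\mathbf{y}\mapsto\norm{g(\cdot,\mathbf{y})}^2_{L^2(\mathbb{M})}$ guaranteed by the same Green's-function bounds used for Proposition \ref{prop:HS_cov}. (Alternatively, one could avoid kernels altogether and argue via trace cyclicity, $\langle\mathcal{C}(q),K\rangle_{\HS}=\mathrm{tr}(K^*\mathcal{G}M_q\mathcal{G}^*)=\mathrm{tr}\big((\mathcal{G}^*K^*\mathcal{G})M_q\big)$, identifying $\mathcal{C}^*K$ with the conjugated diagonal of the kernel of $\mathcal{G}^*K^*\mathcal{G}$, but the direct kernel computation is cleaner.)
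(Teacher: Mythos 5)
Your proof is correct, and it establishes exactly the duality relation $\langle \mathcal{C}(q),K\rangle_{\HS}=\langle q,\mathcal{C}^*K\rangle_{L^2(\Omega)}$ that the paper verifies, but with different bookkeeping. The paper expands the Hilbert--Schmidt inner product in an orthonormal basis $\lbrace\varphi_j\rbrace$ of $L^2(\mathbb{M})$, writes $\overline{\langle K,\mathcal{P}_{\mathbf{y}}\rangle_{\HS}}=\sum_j\langle\mathcal{P}_{\mathbf{y}}\varphi_j,K\varphi_j\rangle$, justifies interchanging the sum with $\int_\Omega$ by dominated convergence (using the uniform bound $\sup_{\mathbf{y}}\|\mathcal{P}_{\mathbf{y}}\|_{\HS}<\infty$), and then applies Fubini--Tonelli term by term to recognize $\mathcal{C}(q)\varphi_j$. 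You instead invoke the unitary identification of $\HS(L^2(\mathbb{M}))$ with $L^2(\mathbb{M}\times\mathbb{M})$ via kernels (the polarized form of \eref{eq:HSnormCq}, which the paper itself uses right after the proposition to derive \eref{eq:adjoint_kernel}) and reduce everything to a single Fubini--Tonelli interchange on a triple integral, with the absolute integrability controlled by Cauchy--Schwarz in $(\mathbf{x}_1,\mathbf{x}_2)$ and the boundedness of $g$ on $\mathbb{M}\times\Omega$. Your route is arguably the cleaner one here, since it avoids the sum--integral interchange entirely; the paper's basis-expansion argument has the mild advantage of not relying on the kernel isometry beyond the norm identity already quoted, and of generalizing more directly to settings where one prefers to work with the operators rather than their kernels. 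Your preparatory observations ($\mathcal{P}_{\mathbf{y}}\in\HS$ with $\|\mathcal{P}_{\mathbf{y}}\|_{\HS}=\|g(\cdot,\mathbf{y})\|_{L^2(\mathbb{M})}^2$, measurability and square-integrability of $\mathbf{y}\mapsto\langle K,\mathcal{P}_{\mathbf{y}}\rangle_{\HS}$) match the well-definedness remarks in the paper's proof, so there is no gap.
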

\begin{proof}
See \ref{sec:appendix}
\end{proof}
Note that if $K\in \HS(L^2(\mathbb{M}))$ is given by its integral kernel $k\in L^2(\mathbb{M}\times \mathbb{M})$, i.e.\ $(K\varphi)(\mathbf{x}_1) = 
\int_{\mathbb{M}} k(\mathbf{x}_1,\mathbf{x}_2)\varphi(\mathbf{x}_1)\,d\mathbf{x}_2$, then in view of the isometry \eref{eq:HSnormCq} we have
\begin{equation}\label{eq:adjoint_kernel}
(\mathcal{C}^*K)(\mathbf{y}) 
= \int_{\mathbb{M}} \int_{\mathbb{M}} 
g(\mathbf{x}_1, \mathbf{y}) \overline{g(\mathbf{x}_2, \mathbf{y})} k(\mathbf{x}_1,\mathbf{x}_2)\,d\mathbf{x}_1,d\mathbf{x}_2\,.
\end{equation}

Recall the elements of the discrete measurement setup, presented in Section \ref{sec:fwd_prob}: \ 
microphone a positions $\left \lbrace \mathbf{x}_m \right \rbrace_{m=1}^M \subset \mathbb{M}$, 
focus points $\left \lbrace \mathbf{y}_n \right \rbrace_{n=1}^N$, 
propagation matrix $ \underline{\mathcal{G}} \in \mathbb{C}^{M \times N} \ $ \eref{eq:prop_mat} and source matrix  $ \underline{M_q} \in \mathbb{R}^{N \times N} \ $. \ \\
The discrete forward operator $\underline{\mathcal{C}}$ is thus defined as
\begin{equation*}
\underline{\mathcal{C}}: \ \mathbb{R}^N \rightarrow \mathbb{C}^{M \times M} , \ \ \ \ \underline{q} \mapsto \underline{\mathcal{G}}  \underline{M_q}  \underline{\mathcal{G}}^{*} \ .
\end{equation*}
In the following we present each reconstruction method in a discrete version for an observed covariance matrix $\underline{C}^{\mathrm{obs}} \in \mathbb{C}^{M \times M}$ and an infinite dimensional version for an observed covariance operator $C^{\mathrm{obs}} \in \HS\left( L^2(\mathbb{M}) \right)$.
\subsection{Covariance Matrix Fitting}
For infinite dimensional quantities we have the least squares problem 
\begin{equation} \label{eq:cmf}
\norm{\mathcal{C}(q) -  C^{\mathrm{obs}}}_{\HS}^2=\min! \ ,
\end{equation}
which is uniquely solvable for $q \in L^{\infty}\left( \Omega \right)$ and exact data $C^{\mathrm{obs}} \in \textnormal{ran} \left(\mathcal{C} \right)$ by Theorem \ref{thm:uniqueness}. In the discrete version, the CMF problem is defined by the 
least squares problem 
\begin{equation}\label{eq:cmf_disc}
\norm{\underline{\mathcal{C}}(\underline{q}) - \underline{C}^{\mathrm{obs}}}_F^2=\min!  
\end{equation}
where $\|\cdot\|_F$ denotes the Frobenius norm, the discrete analog of the Hilbert-Schmidt norm. In \cite{Blacodon2004}, the minimization problem \eref{eq:cmf_disc} is solved for an experimental dataset of a wind tunnel experiment with an aircraft wing.

\subsection{Conventional Beamforming}\label{sec:beamforming}
Conventional Beamforming is probably the most popular evaluation method for aeroacoustic measurement data since it yields a fast and robust estimator of the source power. Instead of solving an inverse problem for all source powers at once, CBF estimates the source power at each focus point $\mathbf{y}_n$ separately. Such methods are often referred to as \textit{array imaging methods}. For a broad overview on different imaging scenarios and their analysis we refer to \cite{Garnier2016}.  

The Beamforming imaging functional $\mathcal{I}: \  \Omega \rightarrow \mathbb{R} $ is defined as
\begin{equation*}
\mathcal{I}(\mathbf{y}) := \argmin_{\mu \in \mathbb{R}} \norm{C^{\mathrm{obs}} - \mu \mathcal{P}_{\mathbf{y}}}_{\HS}^2 \ .
\end{equation*}
If we assume that the empirical estimate $C^{\mathrm{obs}}$ 
of the covariance operator is self-adjoint, 
this one-dimensional minimization problem has the solution 
\begin{equation}\label{eq:I_beamforming}
\mathcal{I}(\mathbf{y}) = \real{ \frac{\langle C^{\mathrm{obs}}, \mathcal{P}_{\mathbf{y}} \rangle_{\HS}}{\norm{ \mathcal{P}_{\mathbf{y}}}_{\HS}^2} } =  \frac{\langle C^{\mathrm{obs}}, \mathcal{P}_{\mathbf{y}} \rangle_{\HS}}{\norm{ \mathcal{P}_{\mathbf{y}}}_{\HS}^2} = \frac{\left( \mathcal{C}^*(C^{\mathrm{obs}}) \right)(\mathbf{y})}{\norm{ \mathcal{P}_{\mathbf{y}}}_{\HS}^2} \ .
\end{equation}
Here the second equality follows from eq.~\eref{eq:adjoint_kernel}.
In helioseismology imaging functionals analogous to \eref{eq:I_beamforming} (without the scaling factor $\|P_{\mathbf{y}}\|^2$) appear as special types of holographic imaging functionals 
(see \cite{GFYBB:18,LB:00a}).

The Hilbert-Schmidt norm of the monopole operator is given by
\begin{eqnarray*}
\norm{\mathcal{P}_{\mathbf{y}}}_{\HS}^2 &= \int_{\mathbb{M} \times \mathbb{M}} \abs{g(\mathbf{x}_1, \mathbf{y}) \overline{g(\mathbf{x}_2, \mathbf{y})} }^2 d(\mathbf{x}_1, \mathbf{x}_2) = \norm{g(\cdot, \mathbf{y} )}_{L^2(\mathbb{M})}^4 > 0 \ .
\end{eqnarray*}

For discrete data and a fixed focus point $\mathbf{y}_n \in \Omega$, the \textit{steering vector} $\mathbf{g}(\mathbf{y}_n) \in \mathbb{C}^{M}$ is defined as the pointwise evaluation of the Green's function at all microphones
\begin{equation*}
\mathbf{g}(\mathbf{y}_n) =  \left( \begin{array}{ccc}
g(\mathbf{x}_1, \mathbf{y}_n) \\
\vdots  \\
g(\mathbf{x}_M, \mathbf{y}_n) \end{array} \right) \ .
\end{equation*}
The discrete monopole operator $\underline{\mathcal{P}_{\mathbf{y}_n}} = \mathbf{g}(\mathbf{y}_n) \mathbf{g}(\mathbf{y}_n)^* \in \mathbb{C}^{M \times M}$ is therefore called \textit{steering matrix}. In analogy to Proposition \ref{prop:adj_fwd}, the discrete adjoint forward operator is 
\begin{equation*}
\underline{\mathcal{C}}^*: \ \mathbb{C}^{M \times M} \rightarrow \mathbb{R}^N , \ \ \ \ \left( \underline{\mathcal{C}}^*\left(\underline{K} \right) \right)_n = \left \langle \underline{K},  \underline{\mathcal{P}_{\mathbf{y}_n}} \right \rangle_F  \ ,
\end{equation*}
where $\langle \cdot , \cdot \rangle_F$ denotes the Frobenius scalar product. Thus the discrete Beamforming functional is defined as
\begin{equation*}
\underline{\mathcal{I}}(\mathbf{y}_n) = \frac{ \mathbf{g}(\mathbf{y}_n)^* \underline{C}^{\mathrm{obs}} \mathbf{g}(\mathbf{y}_n) }{\abs{\mathbf{g}(\mathbf{y}_n)}^4} = \frac{\langle \underline{C}^{\mathrm{obs}}, \underline{\mathcal{P}_{\mathbf{y}_n}} \rangle_{F}}{\big\| \underline{\mathcal{P}_{\mathbf{y}_n}}\big\|_{F}^2} = \frac{\left( \underline{\mathcal{C}}^*(\underline{C}^{\mathrm{obs}}) \right)(\mathbf{y}_n)}{\big\| \underline{\mathcal{P}_{\mathbf{y}_n}}\big\|_{F}^2} \ .
\end{equation*}
\begin{rem}[Time domain array imaging]
CBF can be motivated by the time domain principle of \textit{delay and sum} (DAS) \cite{Johnson1993}, which is strongly related to Kirchhoff migration (see e.g.\ \cite{Borcea2005}). The basic idea of DAS is to shift all sensor time signals according to the time delay to a fixed focus point. Summing up the shifted signals, source signals originating at the focus point accumulate. 
\end{rem}

\subsection{DAMAS}
The idea of DAMAS (\textit{deconvolution approach for the mapping of acoustic sources}) is to deblur the source information obtained by a CBF solution. It is defined by an integral equation of the first kind, 
\begin{equation} \label{eq:damas_orig}
\mathcal{I}(\mathbf{y}) = \int_{\Omega}  \psi(\mathbf{y}, \mathbf{y}')  q(\mathbf{y}') d \mathbf{y}' \ . 
\end{equation}
In this article, DAMAS will always refer to the integral equation \eref{eq:damas_orig} and not to the iterative Gauß-Seidel method that was suggested in \cite{Brooks2006} in order to solve the discrete version of \eref{eq:damas_orig}. The integral kernel $\psi$ is usually referred to as point-spread function (PSF) and defined as
\begin{equation*}
\psi(\mathbf{y}, \mathbf{y}') = \frac{\langle\mathcal{P}_{\mathbf{y}}, \mathcal{P}_{\mathbf{y}'} \rangle_{\HS}}{\norm{ \mathcal{P}_{\mathbf{y}'}}_{\HS}^2} = \frac{\left( \mathcal{C}^*(\mathcal{P}_{\mathbf{y}}) \right)(\mathbf{y}')}{\norm{ \mathcal{P}_{\mathbf{y}'}}_{\HS}^2} \ .
\end{equation*}
For a shift invariant PSF, \eref{eq:damas_orig} reduces to a convolution integral, but for our scenario the PSF is not shift invariant. Nevertheless, deblurring methods like DAMAS are usually called \textit{deconvolution methods} in the aeroacoustic community. The next statement relates the integral equation of DAMAS to the least squares problem of CMF. 
\begin{prop}[Normal equation] \label{prop:normal_eq}
The DAMAS problem \eref{eq:damas_orig} is equivalent to the operator equation
\begin{equation}\label{eq:cmf_normal}
\mathcal{C}^*\mathcal{C}(q) = \mathcal{C}^*C^{\mathrm{obs}}
\end{equation}
which is the normal equation of the CMF problem \eref{eq:cmf}.
\end{prop}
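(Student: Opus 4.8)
The plan is to reduce both the DAMAS integral equation \eref{eq:damas_orig} and the candidate normal equation \eref{eq:cmf_normal} to pointwise identities on $\Omega$ and to observe that they agree after cancelling a strictly positive weight. The key structural fact I would establish first is that the forward operator is an operator-valued integral of monopole operators,
\begin{equation*}
\mathcal{C}(q) = \int_{\Omega} q(\mathbf{y})\,\mathcal{P}_{\mathbf{y}}\,d\mathbf{y} \quad \textnormal{in } \HS(L^2(\mathbb{M})),
\end{equation*}
which follows by comparing the kernel $c_q$ of $\mathcal{C}(q)$ with the kernel $g(\mathbf{x}_1,\mathbf{y})\overline{g(\mathbf{x}_2,\mathbf{y})}$ of $\mathcal{P}_{\mathbf{y}}$ and invoking the isometry \eref{eq:HSnormCq}.

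Next I would evaluate both sides of \eref{eq:cmf_normal} pointwise. Using the expression for $\mathcal{C}^*$ from Proposition \ref{prop:adj_fwd}, the integral representation above, and Fubini to interchange $\langle\cdot,\cdot\rangle_{\HS}$ with the integral over $\Omega$, I get
\begin{equation*}
\left(\mathcal{C}^*\mathcal{C}(q)\right)(\mathbf{y}) = \left\langle \mathcal{C}(q),\mathcal{P}_{\mathbf{y}}\right\rangle_{\HS} = \int_{\Omega}\left\langle \mathcal{P}_{\mathbf{y}'},\mathcal{P}_{\mathbf{y}}\right\rangle_{\HS} q(\mathbf{y}')\,d\mathbf{y}'.
\end{equation*}
A short computation shows $\langle\mathcal{P}_{\mathbf{y}'},\mathcal{P}_{\mathbf{y}}\rangle_{\HS} = \abs{\langle g(\cdot,\mathbf{y}'),g(\cdot,\mathbf{y})\rangle_{L^2(\mathbb{M})}}^2$, which is real, nonnegative, and symmetric in $\mathbf{y},\mathbf{y}'$; in particular it equals $\norm{\mathcal{P}_{\mathbf{y}}}_{\HS}^2\,\psi(\mathbf{y},\mathbf{y}')$ once $\psi$ is normalized by the squared monopole norm at the evaluation point $\mathbf{y}$. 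Similarly, the right-hand side satisfies $(\mathcal{C}^*C^{\mathrm{obs}})(\mathbf{y}) = \langle C^{\mathrm{obs}},\mathcal{P}_{\mathbf{y}}\rangle_{\HS} = \norm{\mathcal{P}_{\mathbf{y}}}_{\HS}^2\,\mathcal{I}(\mathbf{y})$ by \eref{eq:I_beamforming}. Hence \eref{eq:cmf_normal} reads, for a.e.\ $\mathbf{y}\in\Omega$,
\begin{equation*}
\norm{\mathcal{P}_{\mathbf{y}}}_{\HS}^2\int_{\Omega}\psi(\mathbf{y},\mathbf{y}')\,q(\mathbf{y}')\,d\mathbf{y}' = \norm{\mathcal{P}_{\mathbf{y}}}_{\HS}^2\,\mathcal{I}(\mathbf{y}).
\end{equation*}

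Since $\norm{\mathcal{P}_{\mathbf{y}}}_{\HS}^2 = \norm{g(\cdot,\mathbf{y})}_{L^2(\mathbb{M})}^4 > 0$ for every $\mathbf{y}\in\Omega$, division by this weight is an invertible operation and turns the displayed identity into the DAMAS equation \eref{eq:damas_orig}; running the argument backwards shows the converse, so the two problems are genuinely equivalent rather than one merely implying the other. That \eref{eq:cmf_normal} is the normal equation of the CMF problem \eref{eq:cmf} is then the standard least-squares optimality condition: $\mathcal{C}$ is a bounded linear map $L^2(\Omega)\to\HS(L^2(\mathbb{M}))$, so setting the Gateaux derivative of $q\mapsto\norm{\mathcal{C}(q)-C^{\mathrm{obs}}}_{\HS}^2$ to zero yields $\mathcal{C}^*(\mathcal{C}(q)-C^{\mathrm{obs}})=0$. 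I expect the only genuine work to be the justification of the operator-valued integral representation of $\mathcal{C}(q)$ and the Fubini interchange, which are routine given Proposition \ref{prop:HS_cov} and the boundedness of $g$ on $\mathbb{M}\times\overline{\Omega}$; the one conceptual point to get right is the bookkeeping of the weight $\norm{\mathcal{P}_{\mathbf{y}}}_{\HS}^2$, which must be tied to the evaluation point $\mathbf{y}$ in both $\mathcal{I}$ and $\psi$ so that it cancels on division.
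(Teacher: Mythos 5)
Your argument is correct and essentially identical to the paper's: both reduce \eref{eq:cmf_normal} to the pointwise identity $\left(\mathcal{C}^*\mathcal{C}(q)\right)(\mathbf{y})=\int_{\Omega}\langle\mathcal{P}_{\mathbf{y}},\mathcal{P}_{\mathbf{y}'}\rangle_{\HS}\,q(\mathbf{y}')\,d\mathbf{y}'$ (the paper via an orthonormal-basis expansion of the Hilbert--Schmidt inner product plus Fubini, you via the equivalent Bochner-integral representation of $\mathcal{C}(q)$) and then cancel the strictly positive weight $\norm{\mathcal{P}_{\mathbf{y}}}_{\HS}^2$. Your closing remark about tying that weight to the evaluation point $\mathbf{y}$ is well taken: the paper's proof multiplies \eref{eq:damas_orig} by $\norm{\mathcal{P}_{\mathbf{y}}}_{\HS}^2$ and cancels cleanly only under that normalization of $\psi$, even though the paper's displayed definition of $\psi$ carries $\norm{\mathcal{P}_{\mathbf{y}'}}_{\HS}^2$ in the denominator.
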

\begin{proof}
First of all we can multiply \eref{eq:damas_orig} by  $\norm{\mathcal{P}_{\mathbf{y}}}_{\HS}^2$ which yields the equivalent integral equation
\begin{equation} \label{eq:damas_resc}
\left( \mathcal{C}^*(C^{\mathrm{obs}}) \right)(\mathbf{y}) = \int_{\Omega}  \langle\mathcal{P}_{\mathbf{y}}, \mathcal{P}_{\mathbf{y}'} \rangle_{\HS}  \ q(\mathbf{y}') d \mathbf{y}' \ .
\end{equation}
For an orthonormal basis $\lbrace \varphi_j  \rbrace_{j \in \mathbb{N}}$ of $L^2(\mathbb{M})$, reformulating the right-hand side of \eref{eq:damas_resc} yields
\begin{eqnarray}
\int_{\Omega}  \langle\mathcal{P}_{\mathbf{y}}, \mathcal{P}_{\mathbf{y}'} \rangle_{\HS}  \ q(\mathbf{y}') d \mathbf{y}' &=& \int_{\Omega}  \sum \limits_{j=1}^{\infty} \langle \mathcal{P}_{\mathbf{y}} \varphi_j, \mathcal{P}_{\mathbf{y}'} \varphi_j \rangle_{L^2(\mathbb{M})}  \ q(\mathbf{y}') d \mathbf{y}' \nonumber \\
&=& \sum \limits_{j=1}^{\infty} \int_{\Omega} \langle \mathcal{P}_{\mathbf{y}} \varphi_j, \mathcal{P}_{\mathbf{y}'} \varphi_j \rangle_{L^2(\mathbb{M})}  \ q(\mathbf{y}') d \mathbf{y}' \nonumber \\
&=& \sum \limits_{j=1}^{\infty} \int_{\mathbb{M}} \left[ \int_{\Omega} \left( \mathcal{P}_{\mathbf{y}'} \varphi_j \right)(\mathbf{x}) q(\mathbf{y}') d \mathbf{y}' \right] \overline{\left( \mathcal{P}_{\mathbf{y}} \varphi_j \right)(\mathbf{x})} d\mathbf{x} \nonumber \\ \ \label{eq:rhschar} \ . 
\end{eqnarray}
We obtain further
\begin{eqnarray}
\left[ \dots \right]  &=& \int_{\Omega} \left( \int_{\mathbb{M}} g(\mathbf{x},\mathbf{y}') \overline{g(\mathbf{x}', \mathbf{y}')} \varphi_j(\mathbf{x}')d\mathbf{x}' \right) q(\mathbf{y}') d \mathbf{y}' \nonumber \\ &=& \int_{\mathbb{M}} c_q(\mathbf{x}, \mathbf{x}') \varphi_j(\mathbf{x}')d\mathbf{x}' =  \left( \mathcal{C}(q)\varphi_j \right)(\mathbf{x}) \ .  \label{eq:cqchar}
\end{eqnarray}
Inserting \eref{eq:cqchar} into \eref{eq:rhschar} yields
\begin{equation*}
\int_{\Omega}  \langle\mathcal{P}_{\mathbf{y}}, \mathcal{P}_{\mathbf{y}'} \rangle_{\HS}  \ q(\mathbf{y}') d \mathbf{y}' = \left \langle \mathcal{C}(q) , \mathcal{P}_{\mathbf{y}}   \right \rangle_{\HS} =  \left( \mathcal{C}^*\mathcal{C}(q) \right)(\mathbf{y}) \ .
\qedhere
\end{equation*}
\end{proof}

\begin{cor}[DAMAS uniqueness]
For exact data $C^{\mathrm{obs}} \in \textnormal{ran}\left( \mathcal{C} \right) $ and the source space $L^{\infty}\left( \Omega \right)$ the solution of \eref{eq:damas_orig} is unique. 
\end{cor}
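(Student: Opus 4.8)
The plan is to reduce the uniqueness of the DAMAS integral equation \eref{eq:damas_orig} to the injectivity of the forward operator $\mathcal{C}$ that was already established in Theorem \ref{thm:uniqueness}. By Proposition \ref{prop:normal_eq}, a function $q\in L^\infty(\Omega)$ solves \eref{eq:damas_orig} if and only if it solves the normal equation $\mathcal{C}^*\mathcal{C}(q)=\mathcal{C}^*C^{\mathrm{obs}}$. Since this equation is affine-linear in $q$, uniqueness of its solution is equivalent to showing that the homogeneous equation $\mathcal{C}^*\mathcal{C}(q)=0$ admits only the trivial solution in $L^\infty(\Omega)$. So first I would take two solutions $q_1,q_2\in L^\infty(\Omega)$ of \eref{eq:damas_orig}, set $q:=q_1-q_2$, and note that $\Omega$ is bounded so $q\in L^\infty(\Omega)\subset L^2(\Omega)$; by Proposition \ref{prop:normal_eq} and linearity, $q$ satisfies $\mathcal{C}^*\mathcal{C}(q)=0$.

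The key step is the elementary Hilbert-space identity $\textnormal{ker}(\mathcal{C}^*\mathcal{C})=\textnormal{ker}(\mathcal{C})$. Pairing the homogeneous normal equation with $q$ in $L^2(\Omega)$ and using that $\mathcal{C}^*$ is the adjoint of $\mathcal{C}:L^2(\Omega)\to\HS(L^2(\mathbb{M}))$ (Proposition \ref{prop:adj_fwd}) gives
\[
0=\left\langle \mathcal{C}^*\mathcal{C}(q),q\right\rangle_{L^2(\Omega)}
=\left\langle \mathcal{C}(q),\mathcal{C}(q)\right\rangle_{\HS}
=\norm{\mathcal{C}(q)}_{\HS}^2,
\]
so that $\mathcal{C}(q)=0$, i.e.\ $\mathcal{C}(q_1)=\mathcal{C}(q_2)$. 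Here I only use $q\in L^2(\Omega)$, so that the adjoint relation is legitimately applicable, together with the fact that the Hilbert-Schmidt inner product is an inner product.

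Finally, since $q_1,q_2\in L^\infty(\Omega)$, Theorem \ref{thm:uniqueness} applies directly to the identity $\mathcal{C}(q_1)=\mathcal{C}(q_2)$ and yields $q_1=q_2$, which proves uniqueness. I do not expect a genuine obstacle here: the entire analytic content—injectivity of $\mathcal{C}$ on $L^\infty(\Omega)$ via the plane-wave/Fourier argument—has already been carried out in Theorem \ref{thm:uniqueness}, and the corollary adds only the observation that passing from $\mathcal{C}^*\mathcal{C}$ back to $\mathcal{C}$ loses no information. The single point requiring a word of care is the inclusion $L^\infty(\Omega)\subset L^2(\Omega)$ (valid because $\Omega$ is bounded), which guarantees that both the normal-equation pairing above and the hypothesis of Theorem \ref{thm:uniqueness} are meaningful for $q$. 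Note also that the assumption $C^{\mathrm{obs}}\in\textnormal{ran}(\mathcal{C})$ is what guarantees solvability, so that the statement is nonvacuous; the uniqueness argument itself does not rely on this consistency.
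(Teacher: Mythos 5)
Your proposal is correct and follows essentially the same route as the paper: reduce to the normal equation via Proposition \ref{prop:normal_eq}, use the identity $\textnormal{ker}(\mathcal{C}^*\mathcal{C})=\textnormal{ker}(\mathcal{C})$, and conclude with Theorem \ref{thm:uniqueness}. The paper merely states this chain in one line, whereas you spell out the standard pairing argument $\langle \mathcal{C}^*\mathcal{C}q,q\rangle=\norm{\mathcal{C}q}_{\HS}^2$ explicitly, which is a harmless (and arguably helpful) elaboration.
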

\begin{proof}
By the uniqueness of \eref{eq:cmf} we obtain
\[
\textnormal{ker} \left( \mathcal{C}^*\mathcal{C} \right) \cap L^{\infty}(\Omega ) =  \textnormal{ker} \left( \mathcal{C} \right) \cap L^{\infty}(\Omega ) = \lbrace 0 \rbrace \ .\qedhere
\]
\end{proof}

The original, discrete version of DAMAS is given by the linear system
\begin{equation} \label{eq:damas_disc}
\underline{\mathcal{I}}(\mathbf{y}_n) = \sum \limits_{n' =1}^N \underline{\psi}\left( \mathbf{y}_n, \mathbf{y}_{n'} \right) q(\mathbf{y}_{n'}) \ ,
\end{equation} 
with the discrete point spread function
\begin{equation*}
\underline{\psi}\left( \mathbf{y}_n, \mathbf{y}_{n'} \right) = \frac{\Big \langle \underline{\mathcal{P}_{\mathbf{y}_n}}, \underline{\mathcal{P}_{\mathbf{y}_{n'}}} \Big \rangle_F}{\big\|\underline{\mathcal{P}_{\mathbf{y}_{n'}}}\big\|_F^2} \ .
\end{equation*}
Similar to Proposition \ref{prop:normal_eq} the discrete CMF and DAMAS problem are related by the normal equation. 
\begin{cor}[Discrete normal equation]
The problem \eref{eq:damas_disc} is equivalent to the linear system 
\begin{equation}\label{eq:cmf_normal_disc}
\underline{\mathcal{C}}^*\underline{\mathcal{C}}(\underline{q}) = \underline{\mathcal{C}}^*\underline{C}^{\mathrm{obs}}
\end{equation}
which is the normal equation of \eref{eq:cmf_disc}.
\end{cor}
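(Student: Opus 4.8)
The plan is to transcribe the argument of Proposition \ref{prop:normal_eq} into the finite-dimensional setting, where it becomes considerably shorter: the orthonormal-basis expansion that was needed in \eref{eq:rhschar} to handle the continuous Hilbert--Schmidt pairing collapses to elementary matrix algebra. First I would clear the denominator of the discrete point spread function. Since $\big\|\underline{\mathcal{P}_{\mathbf{y}_n}}\big\|_F^2 > 0$ for every $n$, multiplying \eref{eq:damas_disc} by this factor yields the equivalent linear system
\begin{equation*}
\big\|\underline{\mathcal{P}_{\mathbf{y}_n}}\big\|_F^2\, \underline{\mathcal{I}}(\mathbf{y}_n) = \sum_{n'=1}^N \big\langle \underline{\mathcal{P}_{\mathbf{y}_n}}, \underline{\mathcal{P}_{\mathbf{y}_{n'}}} \big\rangle_F\, q(\mathbf{y}_{n'}) \ ,
\end{equation*}
and strict positivity of the scaling factor makes this step a genuine equivalence rather than a mere implication.

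For the left-hand side I would insert the definition of the discrete Beamforming functional, which immediately gives $\big\|\underline{\mathcal{P}_{\mathbf{y}_n}}\big\|_F^2\,\underline{\mathcal{I}}(\mathbf{y}_n) = \big(\underline{\mathcal{C}}^*(\underline{C}^{\mathrm{obs}})\big)(\mathbf{y}_n)$, i.e.\ exactly the right-hand side of \eref{eq:cmf_normal_disc}. For the right-hand side the key observation is that, because $\underline{M_q}=\mathrm{diag}(q(\mathbf{y}_1),\dots,q(\mathbf{y}_N))$ is diagonal, the factorization $\underline{\mathcal{C}}(\underline{q})=\underline{\mathcal{G}}\,\underline{M_q}\,\underline{\mathcal{G}}^*$ expands as a weighted sum of rank-one steering matrices, $\underline{\mathcal{C}}(\underline{q})=\sum_{n'=1}^N q(\mathbf{y}_{n'})\,\underline{\mathcal{P}_{\mathbf{y}_{n'}}}$, since each $\underline{\mathcal{P}_{\mathbf{y}_{n'}}}=\mathbf{g}(\mathbf{y}_{n'})\mathbf{g}(\mathbf{y}_{n'})^*$ is the outer product of the $n'$-th column of $\underline{\mathcal{G}}$. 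Pairing this sum against $\underline{\mathcal{P}_{\mathbf{y}_n}}$ in the Frobenius inner product then produces $\sum_{n'} \big\langle \underline{\mathcal{P}_{\mathbf{y}_n}}, \underline{\mathcal{P}_{\mathbf{y}_{n'}}}\big\rangle_F\, q(\mathbf{y}_{n'}) = \big\langle \underline{\mathcal{C}}(\underline{q}), \underline{\mathcal{P}_{\mathbf{y}_n}}\big\rangle_F = \big(\underline{\mathcal{C}}^*\underline{\mathcal{C}}(\underline{q})\big)(\mathbf{y}_n)$ by the definition of the discrete adjoint. Equating the two sides gives \eref{eq:cmf_normal_disc} componentwise, and this is the normal equation of the least-squares problem \eref{eq:cmf_disc} by the usual finite-dimensional characterization of least-squares minimizers.

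I do not expect any serious obstacle, since the computation is entirely finite-dimensional and the one delicate analytic ingredient of the continuous proof (the interchange of sum and integral through an orthonormal basis) is absent. The only point that needs care is the symmetry of the Frobenius pairing: the point spread function in \eref{eq:damas_disc} couples $\underline{\mathcal{P}_{\mathbf{y}_n}}$ and $\underline{\mathcal{P}_{\mathbf{y}_{n'}}}$ in a fixed order, whereas $\underline{\mathcal{C}}^*$ naturally yields $\big\langle \underline{\mathcal{C}}(\underline{q}), \underline{\mathcal{P}_{\mathbf{y}_n}}\big\rangle_F$. Because every steering matrix is Hermitian, one has $\big\langle \underline{\mathcal{P}_{\mathbf{y}_n}}, \underline{\mathcal{P}_{\mathbf{y}_{n'}}}\big\rangle_F = \mathrm{tr}\big(\underline{\mathcal{P}_{\mathbf{y}_n}}\underline{\mathcal{P}_{\mathbf{y}_{n'}}}\big)$, which is real and symmetric in $n,n'$, so the two orderings coincide; this mirrors the use of self-adjointness of $\underline{C}^{\mathrm{obs}}$ in the Beamforming derivation \eref{eq:I_beamforming}. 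The remaining thing to keep consistent is the identification of the steering matrices with the rank-one pieces of $\underline{\mathcal{G}}\,\underline{M_q}\,\underline{\mathcal{G}}^*$, i.e.\ absorbing the volume weights of \eref{eq:prop_mat} into the columns of $\underline{\mathcal{G}}$, but this is bookkeeping rather than a mathematical difficulty.
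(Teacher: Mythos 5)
Your proposal is correct and is exactly the argument the paper intends: the authors state this corollary without proof, remarking only that it is ``similar to Proposition \ref{prop:normal_eq}'', and your finite-dimensional transcription --- clearing the denominator, expanding $\underline{\mathcal{C}}(\underline{q})=\sum_{n'}q(\mathbf{y}_{n'})\,\underline{\mathcal{P}_{\mathbf{y}_{n'}}}$ from the diagonality of $\underline{M_q}$, and pairing with $\underline{\mathcal{P}_{\mathbf{y}_n}}$ in the Frobenius inner product --- is precisely the discrete counterpart of their continuous computation, with the orthonormal-basis and Fubini steps rightly collapsing to matrix algebra. One caveat you inherit from the paper rather than introduce yourself: as printed, the point spread function is normalized by $\big\|\underline{\mathcal{P}_{\mathbf{y}_{n'}}}\big\|_F^2$ (the source point), so multiplying equation \eref{eq:damas_disc} by $\big\|\underline{\mathcal{P}_{\mathbf{y}_n}}\big\|_F^2$ (the focus point) does not literally cancel the denominator unless the normalization is indexed by the first argument; that is evidently the intended convention, since the paper's own proof of Proposition \ref{prop:normal_eq} performs the identical cancellation in the continuous setting, but it is worth noting that your ``genuine equivalence'' claim rests on reading the definition that way.
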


\subsection{Regularization}
Since the operator $\mathcal{G}$ is infinitely smoothing, the inverse problem \eref{eq:opeq_uniqueness} is ill-posed. Therefore, at least for fine discretizations of the source intensity $q$ regularization is required to obtain stable reconstructions in the presence of noise. In case of CMF this leads to estimators of the form 
\begin{equation}\label{eq:cmf_tikhonov}
\widehat{q}_{CMF,\alpha} \in \argmin_{q\in L^{\infty}(\Omega)}\left[ \norm{\mathcal{C}(q) - C^{\mathrm{obs}}}_{\footnotesize{\HS}(L^2(\mathbb{M}))}^2 + \alpha \mathcal{R}(q)\right]
\end{equation}
and for the DAMAS problem
\begin{equation}\label{eq:damas_tikhonov}
\widehat{q}_{DAMAS,\alpha} \in \argmin_{q\in L^{\infty}(\Omega)}\left[  \norm{\mathcal{C}^*\mathcal{C}(q) - \mathcal{C}^*C^{\mathrm{obs}}}_{L^2(\Omega)}^2 + \alpha \mathcal{R}(q)\right] \ .
\end{equation}
where $\mathcal{R}$ is a convex penalty term and $\alpha>0$ is a regularization parameter. In \cite{Yardibi2008}, the authors present discrete versions of \eref{eq:cmf_tikhonov} and \eref{eq:damas_tikhonov} using non-negativity constraints, box constraints on the sum of the source intensities and sparsity enforcing penalties.

\subsection{Reconstructions from experimental data}
To conclude this section we illustrate the application of the presented methods in an experimental setup. For a typical aeroacoustic experiment, a solid object (for example a model of an aircraft) is placed inside the velocity field of a wind tunnel. The fluid structure interactions generate an acoustic signal, which is measured by a microphone array. The raw time data is further processed to an estimator of the cross correlations $\underline{C}^{\mathrm{obs}}$. The reconstruction of the source powers is often called \textit{source map}. Figure  \ref{fig:sourcemaps} shows an example of a source map for each method. The results for CMF and DAMAS are obtained by quadratic Tikhonov regularization with a non-negativity constraint.  
\begin{figure}
\includegraphics[width = \textwidth]{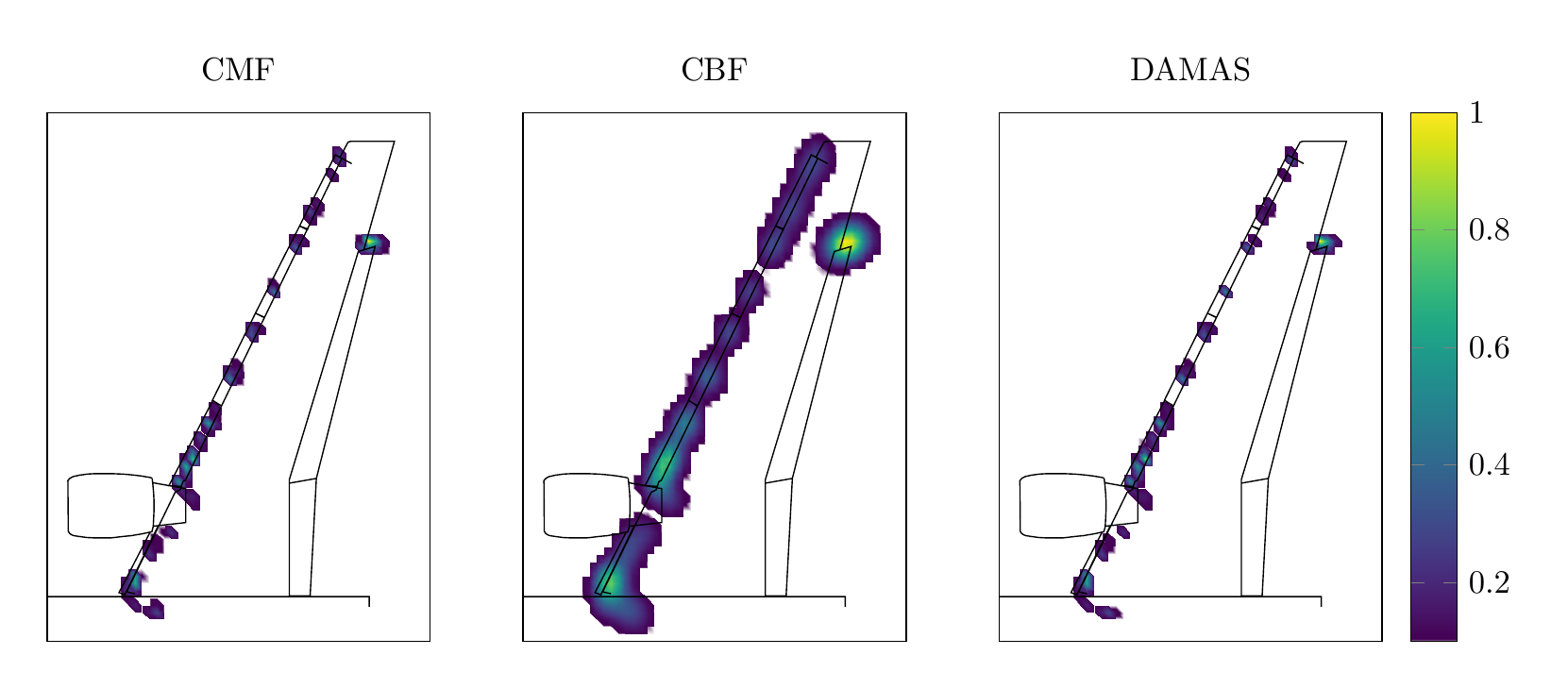}
\caption{Examples of source maps for a Dornier-728 half-model, measured at the cryogenic wind tunnel in Cologne (DNW-KKK) \cite{Ahlefeldt2013}, \cite{Bahr2017}. The source powers are shown on a cut plane through the wing cross section and normalized between $0$ and $1$. In order to cut the noise floor, values below $0.1$ are hidden. The evaluation frequency is $f=8$kHz and the Mach number $\abs{\mathbf{m}} = 0.15$.} \label{fig:sourcemaps}
\end{figure}


\section{Conclusions and outlook} 
Sound source reconstruction methods such as CBF, DAMAS or CMF  are all based on the same discrete sound propagation model. Here we described these methods in an infinite dimensional setup which allowed to relate the reconstruction methods to each other via the adjoint forward operator and the normal equation. 

Of course the free field propagation model used in this paper relies on strong simplifications such as the negligence of solid geometries (aeroacoustic model, wind tunnel walls) which are present in aeroacoustic experiments.
Moreover, we have assumed completely uncorrelated sources, and one may ask if this assumption can be relaxed to some types of structured source correlations. Nevertheless, the discrete version of this simplified forward operator has been successfully applied in aeroacoustic testing for many decades and is still state-of-the-art for most applications in this field. 
The validation of the results for experimental data remains challenging since a ground truth for the source power function is usually not known in such cases. 

As a main result we proved the injectivity of the forward operator for bounded sources. This gives rise to a number of further questions which may be addressed in future research. 
First, one may try to extend the technique of our uniqueness proof to more complicated geometries such as waveguides or the presence of known obstacles or inhomogeneous background media. A second natural direction of research concerns the extension of our uniqueness proof to a conditional stability result or even variational source conditions, which will most likely be of logarithmic type under natural smoothness assumptions. The latter would even yield error bounds for reconstruction methods such as regularized CMF, i.e.\ Tikhonov regularization (see \cite{HohageWeidling2015}). 
Finally, the results of this paper may eventually lead to a better theoretical understanding and justification of helioseismic holography.

\section*{Acknowledgments} 
We would like to thank Dan Yang for helpful discussions on helioseismic holography.

\appendix

\section{Proofs of auxiliary statements}\label{sec:appendix}

\begin{proof}[Proof of Proposition \ref{prop:HS_cov} (Mapping property of the forward operator)]
In view of \eref{eq:HSnormCq} we have to 
show that $c_q \in L^2 \left( \mathbb{M} \times \mathbb{M} \right)$. 
Since $\mathbb{M}$ and $\Omega$ are disjoint and bounded, the integral kernel 
\begin{equation*}
    \kappa(\mathbf{x}_1, \mathbf{x}_2, \mathbf{y}) = g(\mathbf{x}_1, \mathbf{y}) \overline{g(\mathbf{x}_2, \mathbf{y})}
\end{equation*}
is continuous on $\mathbb{M} \times \mathbb{M} \times \Omega$ and therefore $\kappa \in L^2 \left( \mathbb{M} \times \mathbb{M} \times \Omega \right)$. This implies
\begin{eqnarray*}
\norm{c_q}_{L^2(\mathbb{M} \times \mathbb{M})}^2 &=& \int_{\mathbb{M} \times \mathbb{M}} \abs{ \int_{\Omega} \kappa(\mathbf{x}_1, \mathbf{x}_2, \mathbf{y}) q(\mathbf{y})   \ d\mathbf{y} }^2 \ d(\mathbf{x}_1, \mathbf{x}_2) \\
&\leq & \norm{q}_{L^2(\Omega)}^2 \int_{\mathbb{M} \times \mathbb{M}} \int_{\Omega} \abs{\kappa(\mathbf{x}_1, \mathbf{x}_2, \mathbf{y})}^2   \ d\mathbf{y} \ d(\mathbf{x}_1, \mathbf{x}_2) \\
&=&  \norm{q}_{L^2(\Omega)}^2  \norm{\kappa}_{L^2 ( \mathbb{M} \times \mathbb{M} \times \Omega)}^2 \ .
\end{eqnarray*}
\end{proof}

\begin{proof}[Proof of Proposition \ref{prop:adj_fwd} (Adjoint Forward Operator)]
For $q \in L^2(\Omega)$ and $K \in  \HS(L^2(\mathbb{M}))$, we need to show that 
\begin{equation} \label{eq:adjoint_prop}
\int_{\Omega} q(\mathbf{y}) \overline{\langle K, \mathcal{P}_{\mathbf{y}} \rangle_{\HS}} \,d\mathbf{y}  
= \left \langle \mathcal{C} q ,  K \right \rangle_{\HS} \ .
\end{equation}
Note that  $\mathcal{P}_{\mathbf{y}}$ is in fact a  Hilbert-Schmidt operator on $L^2(\mathbb{M})$ with integral kernel $l_{\mathbf{y}}(\mathbf{x}_1, \mathbf{x}_2) =  g(\mathbf{x}_1, \mathbf{y}) \overline{g(\mathbf{x}_2, \mathbf{y})} $, that $C:=\sup_{\mathbf{y}\in\Omega}\|\mathcal{P}_{\mathbf{y}}\|_{\HS} = \sup_{\mathbf{y}\in\Omega}\|l_{\mathbf{y}}\|_{L^2(\Omega\times \Omega)}$ is finite, and that both $l_{\mathbf{y}}$ and 
$\mathcal{P}_{\mathbf{y}}$ depend continuously on 
$\mathbf{y}$ with respect to the natural norms. 
Let $\lbrace \varphi_j \rbrace$ be an orthonormal basis of $L^2(\mathbb{M})$. Then $\overline{\langle K,P_{\mathbf{y}}\rangle_{\HS}}= \tr (K^*P_{\mathbf{y}}) = \sum_{j=1}^\infty \langle P_{\mathbf{y}}\varphi_j, K\varphi_j\rangle$.
The sequence
\begin{equation*}
f_n(\mathbf{y}) = \sum \limits_{j=1}^n \langle  \mathcal{P}_{\mathbf{y}} \varphi_j, K \varphi_j \rangle_{L^2(\mathbb{M})}
\end{equation*}
converges pointwise for all $\mathbf{y} \in \Omega$ and has the integrable majorant $C\|K\|_{\HS}$. Thus we can interchange the integration over $\Omega$ and the infinite sum in \eref{eq:adjoint_prop} by the dominated convergence theorem. This yields
\begin{eqnarray*}
\int_{\Omega} q(\mathbf{y})  \overline{\langle K, \mathcal{P}_{\mathbf{y}} \rangle_{\HS}} \,d\mathbf{y} 
&=& \sum \limits_{j=1}^{\infty} \int_{\Omega} q(\mathbf{y}) \langle  \mathcal{P}_{\mathbf{y}} \varphi_j, K \varphi_j \rangle_{L^2(\mathbb{M})} \,d \mathbf{y} \\
&=& \sum \limits_{j=1}^{\infty} \int_{\Omega} q(\mathbf{y})  \int_{\mathbb{M}}\left(\mathcal{P}_{\mathbf{y}} \varphi_j\right) (\mathbf{x}_1) \overline{\left( K \varphi_j\right)(\mathbf{x}_1)} d \mathbf{x}_1 
d \mathbf{y} \\
&=& \sum \limits_{j=1}^{\infty} \int_{\mathbb{M}}  
\overline{\left( K \varphi_j\right)(\mathbf{x}_1)}
 \int_{\Omega} q(\mathbf{y})  \left(\mathcal{P}_{\mathbf{y}} \varphi_j\right) (\mathbf{x}_1) \,d\mathbf{y}  \,d\mathbf{x}_1 \\
 &=& \sum \limits_{j=1}^{\infty} \int_{\mathbb{M}} 
\overline{\left( K \varphi_j\right)(\mathbf{x}_1)}
 \left(\mathcal{C}(q) \varphi_j \right)(\mathbf{x}_1) \, d \mathbf{x}_1 = \left \langle \mathcal{C}q, K \right \rangle_{\HS} 
\end{eqnarray*}
where we have used the Fubini-Tonelli theorem. 
\end{proof}

\section*{References}
\bibliography{references}
\bibliographystyle{plain}

\end{document}